\numberwithin{equation}{section}
\newtheorem{Thm}{Theorem}[section]
\newtheorem{Lem}{Lemma}[section]
\newtheorem{Prop}{Proposition}[section]
\newtheorem{Cor}{Corollary}[section]
\newtheorem{example}{Example}[section]
\newtheorem{Rem}{Remark}[section]
\begin{document}
\title[ Infinitely Degenerate Mean Field Games]{Mean Field Games with infinitely degenerate diffusion and non-coercive Hamiltonian}
\thanks{Acknowledgements: This work is supported by the NSFC under the grands 12271269 and supported by the Fundamental Research Funds for the Central Universities.}
\author{Yiming Jiang}
\address{School of Mathematical Sciences and LPMC\\ Nankai University\\ Tianjin 300071 China}
\email{ymjiangnk@nankai.edu.cn}
\author{Jingchuang Ren}
\address{School of Mathematical Sciences \\ Nankai University\\ Tianjin 300071 China}
\email{1120200024@mail.nankai.edu.cn}
\author{Yawei Wei}
\address{School of Mathematical Sciences and LPMC\\ Nankai University\\ Tianjin 300071 China}
\email{weiyawei@nankai.edu.cn}
\author{Jie Xue}
\address{School of Mathematical Sciences \\ Nankai University\\ Tianjin 300071 China}
\email{1120200032@mail.nankai.edu.cn}
\keywords{Mean field games; Infinitely degenerate operators; Vanishing viscosity method; Semiconcavity}
\subjclass[2022 ]{35Q89; 35K65; 35A01}
\begin{abstract}
In this paper, we consider a class of infinitely degenerate partial differential systems to obtain the Nash equilibria in the mean field games. The degeneracy in the diffusion and the Hamiltonian may be different. This feature brings difficulties to the uniform boundness of the solutions, which is central to the existence and regularity results. First, from the perspective of the value function in the stochastic optimal control problems, we prove the Lipschitz continuity and the semiconcavity for the solutions of the Hamilton-Jacobi equations (HJE). Then the existence of the weak solutions for the degenerate systems is obtained via a vanishing viscosity method. Furthermore, by constructing an auxiliary function, we conclude the regularity of the viscosity solution for the HJE in the almost everywhere sense.
\end{abstract}
\maketitle

\section{INTRODUCTION}
\subsection{Statement of the problem and motivation}
In this paper, we study the degenerate mean field game (briefly, MFG) systems as follows
\begin{equation}\label{1.2}
\left\{
  \begin{array}{ll}
    -\partial_t u-\mathcal {L}u+H(x,{D_G}u)=F(x,m)\qquad\quad\text{in}\ \mathbb{R}^{2}\times (0,T)\qquad\rm{(HJE)} \\
    \partial_t m-\mathcal {L}^*m-\mathrm{div}_G(mD_pH(x,D_Gu))=0\ \ \text{in}\ \mathbb{R}^{2}\times(0,T)\qquad\rm{(FPE)} \\
     u(x,T)=G(x,m_T),\  m(x,0)=m_0(x)\qquad\ x\in\mathbb{R}^{2}
  \end{array}
\right.
\end{equation}
where $\mathcal {L}$ is the second order operator given by
$$\mathcal {L}u(x,t):=\frac{1}{2}{\mathbf {tr}}\left(\sigma(x)\sigma'(x)D^2u(x,t)\right),$$
and $\mathcal {L}^*$ is the dual operator of $\mathcal {L}$. Here the diffusion matrix $\sigma(x)\sigma'(x)$ is possibly infinitely degenerate given by
\begin{equation}\label{asw}
  \sigma(x)=diag\{\sigma_1(x),\sigma_2(x)\}.
\end{equation}
For  any $x=(x_1,x_2)\in\mathbb{R}^{2}$, the Hamiltonian $H$ is non-coercive given by
$$H(x,D_Gu):=\frac{1}{2}|D_Gu|^2,$$
where $D_G$ is the gradient operator given by
\begin{equation}\label{G1}
D_Gu:=\big(\partial_{x_1}u,h(x_1)\partial_{x_2}u\big).
\end{equation}
Denote by $\mathrm{div}_Gu:=\partial_{x_1}u+h(x_1)\partial_{x_2}u$ the corresponding divergence operator. The function $h(x_1)$ is bounded and possibly infinitely vanishing. The functions $F$ and $G$ satisfy the assumptions {\bf{(H5)}} and {\bf{(H6)}} below. More precise assumptions are listed later.

To put the MFG systems \eqref{1.2} another way, let us consider a set of vector fields $\mathcal {X}=\{Y_1,Y_2\}$ of diagonal form,
%defined on an open domain $\Omega\subset\mathbb{R}^{2}$,
where
 $$Y_i:=\sigma_i\partial_{x_i},\quad\text{for\ any\ }i\in\{1,2\}.$$
Then
 %$$\mathcal {L}={\sum^2_{i=1}}(\frac{1}{2}Y^2_iu- Y_iu\mathrm{div}Y_i),$$ and
 the MFG systems \eqref{1.2} can be regarded as the infinitely degenerate partial differential equations (briefly, PDE) systems induced by vector fields $\mathcal {X}$ as follows
\begin{equation}\label{1.22}
\begin{cases}
    -\partial_t u-{\sum^2_{i=1}}(\frac{1}{2}Y^2_iu- Y_iu\mathrm{div}Y_i)+\frac{1}{2}|D_Gu|^2=F(x,m),&\text{in}\ \mathbb{R}^{2}\times (0,T), \\
       \partial_t m-{\sum^2_{i=1}}\left(\frac{1}{2}(Y^*_i)^2m-Y^*_i(m\mathrm{div}Y_i)\right)-\mathrm{div}_G(mD_Gu)=0, &\text{in}\ \mathbb{R}^{2}\times(0,T), \\
     u(x,T)=G(x,m_T),\  m(x,0)=m_0(x),\qquad\qquad\qquad\quad\  &x\in \mathbb{R}^{2},
\end{cases}
\end{equation}
where $Y_i^*=-Y_i-\mathrm{div} Y_i$ is the dual operator of $Y_i$, and $\mathrm{div} Y_i=\partial_{x_i}\sigma_i$ is the divergence of the vector fields $Y_i$.
%These MFG systems arise from certain differential games with infinitely many small players subject to an inhomogeneous idiosyncratic noise. To be specific,the states of system \eqref{1.2} are governed by stochastic differential equations (briefly, SDE)

%

The motivation of the PDE systems \eqref{1.2} is to describe Nash equilibria in the following MFG. The $u$ in (HJE) is the value  function of an optimal control problem of a generic player, where the dynamics is given by the controlled stochastic differential equations (briefly, SDE)
\begin{align}\label{eq1.7}
\left\{
 \begin{array}{ll}
  dX_{1,s}=\alpha_{1,s}ds+\sigma_1(X_s)dB_{1,s},  \\
  dX_{2,s}=\alpha_{2,s}h(X_{1,s})ds+\sigma_2(X_s)dB_{2,s},\\
   X_{1,t}=x_1\in\mathbb{R},\  X_{2,t}=x_2\in\mathbb{R}.\\
\end{array}
\right.
\end{align}
For any $s\in[t,T]$, set
$X_s:=(X_{1,s},X_{2,s})$, $\alpha_s:=(\alpha_{1,s},\alpha_{2,s})$, $B_s:=(B_{1,s},B_{2,s})$, $b(X_s,\alpha_s):=(\alpha_{1,s}, \alpha_{2,s}h(X_{1,s}))$ and $\sigma(X_s):=diag\big\{\sigma_{1}(X_s), \sigma_2(X_s)\big\}$.
%\begin{align}\label{asw}
%\begin{array}{l}
%X_s:=(X_{1,s},X_{2,s}), \quad \alpha_s:=(\alpha_{1,s},\alpha_{2,s}), \quad B_s:=(B_{1,s},B_{2,s}) \\
%b(X_s,\alpha_s):=(\alpha_{1,s}, \alpha_{2,s}h(X_{1,s})),\quad \sigma(X_s):=diag\big\{\sigma_{1}(X_s),\sigma_2(X_s)\big\}.
%\end{array}
%\end{align}

Then if the evolution of the whole population's distribution $m$ is given, each player wants to choose the optimal control $\alpha_s$ to minimize the cost function
\begin{equation}\label{4.21}
u(x,t)=\inf_{\alpha\in\mathscr{A}(x,t)}\mathbf{E}\bigg[\int^T_t\frac{1}{2}|\alpha_s|^{2}+F(X_s,m_s)ds+G(X_T,m_T)\bigg],
\end{equation}
where $\mathscr{A}(x,t)$ is the set of control processes $\alpha$ such that
\begin{equation}\label{2.28}
\mathbf{E}\bigg[\int^T_tF(X_s,m_s)ds\bigg]<\infty,\ \text{and}\ \mathbf{E}\bigg[\int^T_t|\alpha_s|^2ds\bigg]<\infty.
\end{equation}
In the SDE \eqref{eq1.7}, the drift coefficient $b(X_{s},\alpha_s)$ and the diffusion coefficient $\sigma(X_s)$ are Lipschitz w.r.t. $X_s$ uniformly in $\mathscr{A}(x,t)$. Assume that $B_{1,s}$ and $B_{2,s}$  are independent one dimension standard Brownian motion on a filtered probability space $(\mathbf{\Omega},\mathscr{F},\{\mathscr{F}_t\}_{t\geq0},\mathbf{P})$ satisfying the usual conditions in the stochastic analysis, see Chapter 3 in \cite{Hu}.
 Note that controls $\alpha_s$ are adapted to the filtration generated by $B_s$, valued in $\mathscr{A}(x,t)$. The optimal feedback of each player is given by
\begin{equation}\label{11.20}
\alpha^*(x,t)=-D_{p}H(x,D_G u)=-D_G u(x,t).
\end{equation}

We emphasize that the degeneracy of $Y_i$ caused by the diffusion term and $D_G$ caused by the drift term in the SDE \eqref{eq1.7} is inconsistent.
%Hence the optimal control problems \eqref{4.21} may simulate reality more flexibly. %have a wider range of applications, such as applying on the flocking models. and the research on UAV models.

%Now we first concisely give a derivation of the Hamiltonian-Jacobi equation (briefly, HJE) in the MFG systems \eqref{1.2}.
%Here we concisely give a derivation of the degenerate MFG system \eqref{1.2}.
 %fixed the density $\bar{m}\in C([0,T],\mathcal {P}_1)$ defined as below,
Now we first give a derivation of the Hamiltonian-Jacobi equations (briefly, HJE)
 \begin{equation}\label{5.2}
%\begin{displaymath}\label{1.2}
\left\{
  \begin{array}{ll}
     -\partial_t u-\mathcal {L}u+H(x,{D_G}u)=F(x,\bar{m}),\ \ \quad\ \text{in}\ \mathbb{R}^{2}\times (0,T), \\
    u(x,T)=G(x,\bar{m}_T),\qquad\qquad\qquad\qquad\quad x\in\mathbb{R}^{2},
  \end{array}
\right.
%\end{displaymath}
\end{equation}
with the fixed measure $\bar{m}$, which refers to Chapter 2 in \cite{LR}. % Abbreviated as $X_\tau:=X(\tau)$, $\alpha_s:=\alpha(s)$.
 For any stopping time $\tau\in[0,T]$, using the It\^{o}'s formula to $u$ on $[t,\tau]$ and combined with the SDE \eqref{eq1.7}, we have
\begin{align}\label{2.2}
u(X_\tau,\tau)-u(x,t)
  &= \int^\tau_t \partial_s u(X_s,s)ds+\int^\tau_tDu(X_s,s)dX_s+\frac{1}{2}\int^\tau_tD^2u(X_s,s)d\big<X\big>_s\\
   &= \int^\tau_t\left(\partial_s+\mathcal {L}+\alpha_s  D_G\right)u(X_s,s)ds+\int^\tau_tDu(X_s,s)\sigma(X_s)dB_s.
\end{align}
%where $\mathcal {L}^{\alpha}_s$ is the second order differential operator which is given by
%$$\big[\mathcal {L}^{\alpha}_s u(s, )\big](x):=\alpha_s  D_G u(x,s)+\frac{1}{2}\mathbf{tr}(\sigma\sigma'D^2u(x,s))
%=(\alpha_1,\alpha_2) (\partial_{x_1}\varphi, h(x_1)\partial_{x_2}\varphi)+\partial_{x_1 x_1}\varphi+h^2 (x_1)\partial_{x_2 x_2}
%.$$
By the dynamic programming principle (briefly, DPP), we have
\begin{equation}\label{1.08}
 u(x,t)=\inf_{\alpha\in\mathscr{A}(x,t)}\mathbf{E}\left[\int^\tau_t \frac{1}{2}|\alpha_s|^{2}+F(X_s,\bar{m}_s)ds+u(X_\tau,\tau)\right].
 \end{equation}
By the martingale property, we have $\mathbf{E}\left[\int^\tau_t Du(X_s,s)\sigma(X_s)dB_s\right]=0.$ Plugging \eqref{2.2} into \eqref{1.08}, we have
\begin{align*}
% \nonumber to remove numbering (before each equation)
%&= \inf_{\alpha\in\mathscr{A}(x,t)}\mathbf{E}_x\left[\int^{\tau}_{t}H^*(u,\alpha(s))+F(X(s),m(s))ds+u(\tau, X_\tau)-u(x,t)\right] \\
   \inf_{\alpha\in\mathscr{A}(x,t)}\mathbf{E}\left[\int^{\tau}_{t}\frac{1}{2}|\alpha_s|^2+F(X_s,\bar{m}_s)ds+\int^{\tau}_{t}(\partial_s+\mathcal {L} +\alpha_s  D_G) u(x,s)ds\right] &=0.
\end{align*}
Let $\tau=t+\delta$, divide by $\delta$ and let $\delta\rightarrow0$, we obtain the HJE
\begin{equation}\label{1.7}
\partial_tu(x,t)+\mathcal {L}u(x,t)+F(x,m)+\inf_{\alpha_t\in\mathscr{A}(x,t)}\big\{H^*(x,\alpha_t)+\alpha_t  D_G u(x,t)\big\}=0,
\end{equation}
%Let the entries of the diffusion matrix $A(x,t)=\sigma(x,t)\sigma'(x,t)$ are given by
%$$a_{ij}(x,t):=\sum^{2}_{i,j=1}\sigma_{ik}(x,t)\sigma_{jk}(x,t),$$
where $H^*(x,\alpha_s):=\frac{1}{2}|\alpha_s|^{2},$ is the Fenchel conjugate of the Hamiltonian $H(x,D_Gu)$ w.r.t. the second variable. Hence the HJE \eqref{5.2} is valid. % which implies that
%$$-\partial_tu(x,t)-\mathcal {L}u(x,t)+H(x,D_Gu)=F(x,\bar{m}).$$

Next, %fixed the optimal feedback control of each player given in \eqref{11.20},
we give the derivation of the Fokker-Planck equation (briefly, FPE)
\begin{equation}\label{5.4}
%\begin{displaymath}\label{1.2}
\left\{
  \begin{array}{ll}
    \partial_t m-\mathcal {L}^*m-D_Gm  D_Gu-m\Delta_Gu=0,\quad\ \ \text{in}\ \mathbb{R}^{2}\times(0,T),\\
     m(x,0)=m_0(x),\qquad\qquad\qquad\qquad\qquad\quad\ x\in\mathbb{R}^{2},
  \end{array}
\right.
%\end{displaymath}
\end{equation}
where $\Delta_G u:=\partial^2_{x_1}u+h^2(x_1)\partial^2_{x_2}u$, which refers to Chapter 1 in \cite{Carmona16}.
If $\varphi\in C^\infty_0(\mathbb{R}^2)$ is any test function, as \eqref{2.2}, then the It\^{o}'s formula gives
\begin{align}\label{02.3}
% \nonumber to remove numbering (before each equation)
  \varphi(X_t) %&= \varphi(X_0)+\int^{t}_{0}\partial_x \varphi_s dX_s+\frac{1}{2}\int^{t}_{0}\partial_{xx} \varphi_s d\left<X\right>_s\\\nonumber
%   &= \varphi(X_0)+\int^{t}_{0}\alpha_1\partial_{x_1}\varphi_s+ h(x_1)\alpha_2\partial_{x_2}\varphi_s ds +\sqrt{2}\int^{t}_{0}\partial_{x_2}\varphi_s dB^1_s \\\nonumber
%&\quad +\sqrt{2}\int^{t}_{0}h(x_1)\partial_{x_2}\varphi_s dB^2_s+\int^{t}_{0}\partial_{x_1x_1}\varphi_s
 %   +h^{2}(x_1)\partial_{x_2x_2}\varphi_s ds\\\nonumber
   &= \varphi(X_0)+\int^{t}_{0}\big(\mathcal {L}+D_Gu  D_G \big)\varphi (X_s)ds+\int^{t}_{0}D\varphi(X_s)\sigma(X_s) dB_s.
\end{align}
Denote the distribution of $X_t$ by $\mu_t(dx)=\mathbf{P}(X_t\in dx)$, and use the notation
$\left<\varphi,\mu\right>:=\int_{\mathbb{R}^2}\varphi(x)\mu(dx).$
Taking expectations on both sides of \eqref{02.3}, we have
$$\left <\varphi,\mu_t\right> =\big <\varphi,\mu_0+\int^{t}_{0}\mathcal {L}^*\mu_s +\mathrm{div}_G(\mu_s D_Gu) ds\big>.$$
Assume that $\mu_t$  has a density satisfying $\mu_t(dx)=m(x,t)dx$. For the arbitrary of $\varphi$, then the density $m$ is the solution of
\begin{equation}\label{eq2.11}
\partial_t m=\mathcal {L}^* m+\mathrm{div}_G(m D_Gu)=\mathcal {L}^* m+D_Gm  D_Gu+m\Delta_Gu,
\end{equation}
with initial distribution $m_0$, and the FPE \eqref{5.4} is valid.
\subsection{Research history and main results}
MFG theory is devoted to the analysis of differential games with infinitely many players. This theory has been introduced by Lasry and Lions \cite{1} and \cite{2}. At about the same time, Huang et al. \cite{3} solved the large population games independently. Then MFG has been studied extensively in many different fields. Bensoussan et al. \cite{Be} studied the MFG and mean field type control theory. Carmona and Delarue \cite{6} focused on the theory and applications of MFG by probabilistic approach. Gangbo \cite{5} developed optimal transport theory within the MFG framework. Gomes et al. \cite{Gomes} discussed regularity theory for MFG systems either stationary or time-dependent, local or nonlocal. Cardaliaguet et al. \cite{Cardaliaguet15} obtained  the existence of classical solutions for the master equation of MFG. The notes written by Cardaliaguet \cite{Cardaliaguet} and by Ryzhik \cite{LR} showed the more comprehensive analysis of the MFG.

Degenerate MFG systems have much fewer references than the classical ones. Cardaliaguet et al. \cite{11} tackled the degenerate second order systems with local coupling and coercive first order operators, and they established the existence and uniqueness of suitably defined weak solutions by using a variational approach. Mannucci et al. \cite{0} studied the non-coercive first order MFG systems and obtained a weak solution via a vanishing viscosity method. Ferreira et al. \cite{80} obtained the existence of weak solutions to a wide class of time-dependent monotone MFG by using Minty's method. Cardaliaguet et al. \cite{Com} developed a new notion of weak solutions for the MFG with common noise and degenerate idiosyncratic noise. Under the H\"{o}rmander condition introduced by H\"{o}rmander \cite{Hormander}, Dragoni and Feleqi \cite{16} studied the second order ergodic systems, which enjoy more regularily properties than general degenerate systems. %, which introduced by H\"{o}rmander \cite{Hormander} .
In this paper, the degeneracy of the second order MFG systems \eqref{1.2} in the diffusion term and the Hamiltonian may be different and without H\"{o}rmander condition.

The infinitely degenerate operators do not satisfy the H\"{o}rmander condition, which brings difficulties for regularity. %The first known hypoellipticity results for infinitely degenerate operators are due to Fedi\v{l} \cite{32}. The paper \cite{32} is devoted to the proof of a criterion for hypoellipticity that is formulated in terms of estimates on the weighted Sobolev space norms of finite, infinitely differentiable functions.
Melrose and Mendoza \cite{012} studied the infinitely degenerate elliptic operators and the mapping properties of elliptically totally characteristic pseudo differential operators by the so-called B-calculus. Schulze and his group worked on the microlocal analysis of the infinitely degenerate elliptic operators arising from manifolds with singularities, see \cite{018} \cite{019} and the references therein. Morimoto and Xu \cite{33} studied the semilinear Dirichlet problems of infinitely degenerate operators and proved the existence and regularity of weak solutions under the assumption of logarithmic regularity estimates. Chen et al. \cite{CW3}-\cite{CW2} focused on the infinitely degenerate operators, and gave the existence of the distribution solution for the  semilinear  degenerate elliptic equations. Here the MFG systems \eqref{1.2} can be regarded as the PDE systems \eqref{1.22} with the infinitely degenerate operators.

%We consider the MFG problem arising from the flocking model, which has been proposed for the analysis and simulation of the behavior of large crowds from individual rules of conduct. In a groundbreaking contribution, Cucker and Smale \cite{21} gave a complete mathematical analysis of a form of Vicsek model which they propose for the behavior of flocks of birds. Their model is now known as the Cucker-Smale model of flocking. According to these authors, the collective motion of the birds can be captured mathematically by a high dimensional deterministic dynamical system. The extension which teased our curiosity was proposed by Nourian, Cains, Malham\'{e} and M. Huang \cite{20} inthe form of an equilibrium problem, whose dynamics are given by stochastic differential equations (SDEs).

The references mentioned above motivate us to discover the existence and uniqueness of the coupling solutions for the degenerate MFG systems \eqref{1.2}.
Now we list our notions and assumptions as follows.

 Let $\mathcal {P}_{1}$  be the set of Borel probability measures $m$ on $\mathbb{R}^{2}$, such that $\int_{\mathbb{R}^{2}}|x|dm_t(x)<\infty$, and  endowed with the Kantorovitch-Rubinstein distance
$$d_1(\mu,\nu):=\inf_{\gamma\in\Pi(\mu,\nu)}\int_{\mathbb{R}^2}|x-y|d\gamma(x,y),$$
where $\Pi(\mu,\nu)$ is the set of Borel probability measures on $\mathbb{R}^2$ such that $\gamma(E\times\mathbb{R}^2)=\mu(E)$,
 and $\gamma(\mathbb{R}^2\times E)=\nu(E)$ for any Borel set $E\subset\mathbb{R}^2$. More details of this distance can refer to  \cite{Cardaliaguet}.

Let $\mathcal {C}$ be the set of maps $\mu\in C([0,T],\mathcal {P}_{1})$, such that
 $\sup\limits_{t\in[0,T]}\int_{\mathbb{R}^{2}}|x|^2dm_t\leq C$ and
\begin{equation}\label{eq2.7}
\sup_{s\neq t}\frac{d_1(\mu_s,\mu_t)}{|t-s|^{\frac{1}{2}}}\leq C.
\end{equation}
Then $\mathcal {C}$ is a convex closed subset of $C([0,T];\mathcal {P}_{1})$ and compact in $\mathcal {P}_1$, more properties of the space of probability measures refer to Chapter 5 in \cite{Cardaliaguet}.

Denote $C^2(\Omega)$ the space of functions with continuous second order derivatives endowed with the norm $$\|f\|_{C^2(\Omega)}:=\sup_{x\in\Omega}\big\{|f(x)|+|Df(x)|+|D^2f(x)|\big\}.$$

Throughout this paper, $C>0$ is a generic constant which may differ from line by line, and the following assumptions are required.

\noindent{\bf(H1)} The function $h:\mathbb{R}\to\mathbb{R}$ is possibly infinitely vanishing. Moreover, the set  $\mathcal {N}(h):=\{y\in\mathbb{R}|h(y)=0\}\neq\emptyset$.

\noindent\textbf{(H2)} The function $h$ is $C^2(\mathbb{R})$ with $\|h\|_{C^2(\mathbb{R})}\leq C$.

\noindent\textbf{(H3)} The entries of the diffusion matrix $\sigma(x)$ are Lipschitz continuous and $C^2(\mathbb{R}^{2})$ with
$\|\sigma_{i}(\cdot)\|_{C^2(\mathbb{R}^{2})}\leq C$, for any $i\in\{1,2\}$.

\noindent\textbf{(H4)} The initial distribution $m_0$ is absolutely continuous w.r.t. the Lebesgue measure and has a $ C^{2,\alpha}$ continuous density, for $\alpha\in(0,1)$, still denoted by $m_0$, which satisfies $\int_{\mathbb{R}^2}|x|^2 dm_0<\infty$.
%The uniqueness of the MFG system \eqref{1.2} depends on the  monotonicity of $F$ and $G$.

\noindent\textbf{(H5)} The functions $F(x,m)$ and $G(x,m_T)$ are real-valued continuous functions on $\mathbb{R}^{2} \times\mathcal{P}_1$ and Lipschitz continuous from $\mathcal{P}_1$ to $C^2(\mathbb{R}^{2})$ uniformly for $x\in\mathbb{R}^{2}$. Moreover, there exists a constant $C>0$, such that for any $m\in\mathcal{P}_1$
$$\|F(\cdot,m)\|_{C^2(\mathbb{R}^{2})}+\|G(\cdot,m_T)\|_{C^2(\mathbb{R}^{2})}\leq C.$$

\noindent\textbf{(H6)} The $F(\cdot,m)$ and $G(\cdot,m_T)$ are monotonically increasing w.r.t. the measure $m$.

The higher regularity of the solution for the HJE \eqref{1.2} requires the following assumption.

\noindent\textbf{(H7)} There exists a constant $C>0$, for any $m\in\mathcal{P}_1$ such that
$$\|F(\cdot,m)\|_{C^3(\mathbb{R}^{2})}+\|G(\cdot,m_T)\|_{C^3(\mathbb{R}^{2})} +\|h\|_{C^3(\mathbb{R})}+\|\sigma_{i}(\cdot)\|_{C^3(\mathbb{R}^{2})}\leq C.$$

\begin{example}
For any $(x_1,x_2)\in\mathbb{R}^2$, a special family of vector fields is given by
\begin{equation}\label{1.002}
 \mathcal {X}=\{Y_1,Y_2\},\ \text{with}\ \ Y_1=\partial_{x_1}, Y_2=\lambda(x_1)\partial_{x_2},
\end{equation}
where $\lambda$ is a continuous function and is expected for at most a finite number of zero points.
This kind of vector fields $\mathcal {X}$ is Grushin type, which has been studied in many works \cite{41} and \cite{48}.%\cite{Grush}.

Specifically, take the function
\begin{equation}\label{1.03}
\lambda(x)=\begin{cases}
e^{-\frac{1}{x^2}}, &\text{if}\  x\neq0, \\
0,                  &\text{if}\  x=0.
\end{cases}
\end{equation}
Then $\lambda(x)$ vanishes to any order at the origin. Hence the family of vector fields $\mathcal {X}$ in \eqref{1.002} is infinitely degenerate.

Note that $D_Gu$ defined in \eqref{G1} has the Grushin structure, then the example \eqref{1.03} can also adapt to the function $h(x_1)$.
\end{example}
\begin{example}
Easy examples for a family of infinitely degenerate vector fields $\mathcal {X}=\{Y_1,Y_2\}$ satisfying the assumption {\bf{(H3)}} are given by
$$Y_1=sinx_1\partial_{x_1},\ \text{or}\ \frac{x_1}{\sqrt{x_1+1}}\partial_{x_1}, \  \text{or}\ \frac{x_1}{e^{x_1}}\partial_{x_1},\ \text{and}\ Y_2=\partial_{x_2}.$$
\end{example}

Here below we state the main results of this paper.
First, we discuss the regularity of the value function, which are used later to prove the existence of weak solutions to the MFG systems \eqref{1.2}.
\begin{Lem}\label{Lem2.2}(Lipschitz continuity)
Under assumptions {\bf{(H1)}}-{\bf{(H5)}}, then the HJE \eqref{5.2} has a unique bounded uniformly continuous viscosity solution given by \eqref{4.21}. Moreover,
 the value function $u(x,t)$ defined in \eqref{4.21} is Lipschitz continuous w.r.t. the spatial variable $x$ and the time variable$t$ respectively.
\end{Lem}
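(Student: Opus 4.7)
The plan is to extract every conclusion from the stochastic representation \eqref{4.21} by the classical machinery of stochastic optimal control, carefully adapted to the infinitely degenerate setting. To begin, the assumptions (H3)--(H5) guarantee that \eqref{eq1.7} has a unique strong solution $X_s^{x,t,\alpha}$ for each admissible $\alpha$. Choosing $\alpha\equiv 0$ together with the uniform $C^2$-bounds on $F,G$ yields an upper bound for $u$, while nonnegativity of $\tfrac12|\alpha|^2$ provides a lower bound; hence the value function is bounded.

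Next, for spatial Lipschitz continuity I would couple two trajectories starting at $x$ and $y$ driven by the \emph{same} near-optimal control $\alpha^{\varepsilon}$ for $u(y,t)$. Since $h$ and $\sigma_i$ are Lipschitz, Gronwall's inequality applied to \eqref{eq1.7} gives
\[
\mathbf{E}\Bigl[\sup_{s\in[t,T]}\bigl|X_s^{x,t,\alpha^{\varepsilon}}-X_s^{y,t,\alpha^{\varepsilon}}\bigr|^2\Bigr]\leq C|x-y|^2.
\]
Subtracting the two cost functionals and using the Lipschitz $C^2$-estimates on $F,G$ in (H5) produces $u(x,t)-u(y,t)\leq C|x-y|+\varepsilon$; symmetry and $\varepsilon\to 0$ close the argument, with the constant uniform in $t$.

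For time regularity I would use the DPP \eqref{1.08} with $\tau=t+\delta$. Testing with $\alpha\equiv 0$ gives
\[
u(x,t)-u(x,t+\delta)\leq\mathbf{E}\Bigl[\int_t^{t+\delta}F(X_s,m_s)\,ds+u(X_{t+\delta},t+\delta)-u(x,t+\delta)\Bigr],
\]
and combining the $O(\delta)$ control of the running cost with the already-established spatial Lipschitz estimate and the moment bound $\mathbf{E}|X_{t+\delta}-x|\leq C\sqrt{\delta}$ controls the difference. The reverse inequality is obtained symmetrically from a near-optimal $\alpha$ at time $t$, truncated past $t+\delta$, using boundedness of $u$ to absorb $\mathbf{E}\int_t^{t+\delta}|\alpha_s|^2\,ds$. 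The existence of a bounded uniformly continuous viscosity solution is then standard: for any smooth $\varphi$ touching $u$ from below or above at $(x_0,t_0)$, apply It\^o's formula \eqref{2.2} with a constant test control inside the DPP to derive the sub/supersolution inequalities. Uniqueness follows from the comparison principle of Crandall--Ishii--Lions for second order degenerate HJB equations with quadratic Hamiltonians; the structured gradient $D_G$ does not obstruct the doubling-variable trick because $h$ is bounded and $C^2$.

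The principal obstacle, as emphasized in the introduction, is that the diffusion degeneracy (through $\sigma$) and the drift/Hamiltonian degeneracy (through $h$) are \emph{inconsistent}, so no H\"ormander-type hypoellipticity is available to smooth things out. Everything therefore has to be squeezed out of purely Lipschitz-and-boundedness control of coefficients and costs. I expect the technically heaviest point to be upgrading the naive $1/2$-H\"older-in-time bound coming from the diffusion moment estimate into the Lipschitz bound claimed: this will likely require combining the spatial Lipschitz estimate with the HJE itself (to control $H(x,D_G u)$ pointwise) and then bootstrapping, rather than relying solely on the test control $\alpha\equiv 0$.
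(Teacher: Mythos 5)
Your treatment of boundedness, of spatial Lipschitz continuity (driving the trajectories from $x$ and $y$ with the \emph{same} near-optimal control, Gronwall, Lipschitz dependence of $F,G$ on the state), and of existence/uniqueness of the bounded uniformly continuous viscosity solution coincides with the paper's Steps 1--2. The genuine gap is in the time regularity, and you have diagnosed it yourself: testing the DPP with $\alpha\equiv 0$ and invoking $\mathbf{E}\bigl[|X_{t+\delta}-x|\bigr]\le C\sqrt{\delta}$ only yields $\tfrac12$-H\"older continuity in $t$, and your proposed remedy (``control $H(x,D_Gu)$ pointwise from the HJE and bootstrap'') is not carried out; it is also not obviously available at this stage, since $u$ is so far only a viscosity solution and the equation cannot be evaluated pointwise.

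The paper closes this gap with two specific devices. First, once spatial Lipschitz continuity is established, the optimal feedback $\alpha^{*}=-D_{G}u$ is bounded, $|\alpha^{*}|_{\infty}\le C$ (see \eqref{2.25}), so the DPP is run with $\alpha^{*}$ and the running cost $\int_{t}^{s}\tfrac12|\alpha^{*}_{r}|^{2}+F\,dr$ contributes $O(|s-t|)$; with $\alpha\equiv 0$ you would instead need a separate argument that a near-optimal control has $\int_{t}^{t+\delta}|\alpha_{r}|^{2}\,dr$ of order $\delta$. Second --- and this is the step your proposal lacks --- the spatial displacement is estimated through the \emph{expectation of the increment} rather than the expected absolute increment: writing $X^{x,t}_{s}-x=\int_{t}^{s}b(X_{\tau},\alpha^{*}_{\tau})\,d\tau+\int_{t}^{s}\sigma(X_{\tau})\,dB_{\tau}$, the stochastic integral has zero mean, so $\bigl|\mathbf{E}[X^{x,t}_{s}-x]\bigr|\le C|\alpha^{*}|_{\infty}|s-t|$ with no $\sqrt{\delta}$ loss; this is exactly \eqref{x1}. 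Combining the two gives $|u(x,s)-u(x,t)|\le C|s-t|$. One caution if you adopt this route: passing from $\bigl|\mathbf{E}[u(x,s)-u(X^{x,t}_{s},s)]\bigr|$ to a bound by $\bigl|\mathbf{E}[X^{x,t}_{s}-x]\bigr|$ uses more than bare Lipschitz continuity of $u(\cdot,s)$ (Lipschitz only controls $\mathbf{E}|X^{x,t}_{s}-x|$); it requires a first-order expansion of $u(\cdot,s)$ with a quadratic remainder controlled by $\mathbf{E}\bigl[|X^{x,t}_{s}-x|^{2}\bigr]=O(|s-t|)$, i.e.\ a two-sided second-order bound of the type underlying Lemma \ref{2.3}, and this should be made explicit.
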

\begin{Lem}\label{2.3}(Semiconcavity)
Under assumptions {\bf{(H1)}}-{\bf{(H5)}}, the value function $u(x,t)$ defined in \eqref{4.21} is semiconcave w.r.t. the variable $x$.
\end{Lem}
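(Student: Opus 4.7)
The strategy is a coupling argument: use the same (nearly optimal) control starting from the three initial conditions $x-h, x, x+h$ and exploit the second-order regularity of the data. Fix $(x,t)$ and $h \in \mathbb{R}^2$, and for $\varepsilon > 0$ choose an $\varepsilon$-optimal admissible control $\alpha$ for $u(x,t)$. By Lemma \ref{Lem2.2}, $u$ is Lipschitz uniformly in $(x,t)$, so the formal feedback $\alpha^* = -D_G u$ is essentially bounded; a standard truncation produces an $\varepsilon$-optimal control with $\|\alpha\|_\infty \leq R$, where $R$ depends only on the Lipschitz constant of $u$ and not on $(x,t,\varepsilon)$. Solving the SDE \eqref{eq1.7} with this $\alpha$ from initial data $x, x \pm h$ on one probability space gives coupled trajectories $X^x, X^{x+h}, X^{x-h}$. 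Since $F, G$ are bounded (H5), $\alpha$ is also admissible from the perturbed initial points, whence
\begin{equation*}
u(x+h,t) + u(x-h,t) - 2u(x,t) \leq 2\varepsilon + \int_t^T \mathbf{E}[\Delta F_s]\,ds + \mathbf{E}[\Delta G],
\end{equation*}
with $\Delta F_s := F(X^{x+h}_s,m_s) + F(X^{x-h}_s, m_s) - 2F(X^x_s, m_s)$ and $\Delta G$ defined analogously at time $T$.

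Two coupled SDE estimates drive the argument. First, the Lipschitz continuity of $b(\cdot,\alpha)$ and $\sigma$ (from H2, H3) together with $\|\alpha\|_\infty \leq R$ gives, by a standard Gronwall/BDG estimate on $|X^{x\pm h}_s - X^x_s|^2$,
\begin{equation*}
\mathbf{E}\sup_{s\in[t,T]}|X^{x\pm h}_s - X^x_s|^2 \leq C|h|^2.
\end{equation*}
For the crucial second-order quantity $Z_s := X^{x+h}_s + X^{x-h}_s - 2X^x_s$ with $Z_t = 0$, a second-order Taylor expansion of $b$ and $\sigma$ about $X^x_s$ (justified by the $C^2$ regularity from H2, H3) yields an SDE of the form
\begin{equation*}
dZ_s = [A_s Z_s + r^1_s]\,ds + [B_s Z_s + r^2_s]\,dB_s,
\end{equation*}
where $A_s, B_s$ are uniformly bounded and the remainders satisfy $|r^j_s| \leq C(|X^{x+h}_s - X^x_s|^2 + |X^{x-h}_s - X^x_s|^2)$. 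A second Gronwall argument, combined with the first-order estimate, then produces $\mathbf{E}\sup_{s\in[t,T]}|Z_s|^2 \leq C|h|^4$.

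Finally, the $C^2$ regularity of $F$ and $G$ in $x$ from H5 and a Taylor expansion at $X^x_s$ give
\begin{equation*}
\mathbf{E}|\Delta F_s| \leq \|DF\|_\infty \mathbf{E}|Z_s| + \|D^2 F\|_\infty \mathbf{E}\bigl[|X^{x+h}_s - X^x_s|^2 + |X^{x-h}_s - X^x_s|^2\bigr] \leq C|h|^2,
\end{equation*}
and similarly $\mathbf{E}|\Delta G| \leq C|h|^2$. Substituting into the inequality of the first paragraph and letting $\varepsilon \to 0$ yields $u(x+h,t) + u(x-h,t) - 2u(x,t) \leq C|h|^2$ with $C$ uniform in $(x,t,h)$, which is exactly the semiconcavity of $u(\cdot,t)$. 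The main obstacle is guaranteeing uniform Gronwall constants in the second-order estimate, because the drift $b(x,\alpha) = (\alpha_1, \alpha_2 h(x_1))$ has first- and second-order Lipschitz constants in $x$ proportional to $|\alpha_2|$; this is precisely why the $L^\infty$ control bound extracted from the Lipschitz regularity of $u$ proved in Lemma \ref{Lem2.2} is essential.
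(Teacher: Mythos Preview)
Your argument is correct and follows the same overall strategy as the paper: couple three trajectories driven by a single near-optimal control, Taylor-expand $F$ and $G$ to second order, and control the first-order remainder via a second-order SDE stability estimate. The organizational differences are minor: the paper works with a general convex combination $x^\lambda=\lambda x+(1-\lambda)y$ and handles the first-order term by separately Taylor-expanding $h$ and $\sigma_i$ inside the expressions $I_1,I_2$, whereas you take the symmetric case $\lambda=\tfrac12$ and package the same computation as a Gronwall bound on the second difference $Z_s=X^{x+h}_s+X^{x-h}_s-2X^{x}_s$; the paper relies only on the $L^2$ admissibility bound $\mathbf{E}\int_t^T|\alpha_s|^2\,ds\leq C$ for an $\varepsilon$-optimal control, while you invoke the $L^\infty$ bound obtained from the Lipschitz estimate of Lemma~\ref{Lem2.2} via truncation. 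Either bookkeeping leads to the same inequality $u(x+h,t)+u(x-h,t)-2u(x,t)\leq C|h|^2$, and your formulation via the SDE for $Z_s$ is somewhat more transparent.
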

Next, fixed the measure $\bar{m}\in C([0,T];\mathcal {P}_{1})$, we construct the auxiliary systems
\begin{equation}\label{4.2}
\left\{
  \begin{array}{ll}
    -\partial_t u-\big(\epsilon\Delta+\mathcal {L}\big) u+H(x,{D_G}u)=F(x,\bar{m}),\qquad\ \text{in}\ \mathbb{R}^{2}\times(0,T), \\
     \partial_t m-\big(\epsilon\Delta+\mathcal {L}^*\big) m-\mathrm{div}_G(mD_Gu)=0,\qquad\qquad\text{in}\ \mathbb{R}^{2}\times(0,T), \\
     u(x,T)=G(x,\bar{m}_T),\  m(x,0)=m_0,\qquad\qquad\qquad\ x\in\mathbb{R}^{2},
  \end{array}
\right.
\end{equation}
and obtain the existence and uniqueness of the vanishing viscosity limit as follows.
\begin{Prop}\label{3.1}
Under assumptions {\bf{(H1)-(H5)}}, let $(u^\epsilon,m^\epsilon)$ be a unique coupling of the solutions to the auxiliary systems \eqref{4.2}. Then up to a subsequence, $\{u^\epsilon\}$ converges to $u$ in $\mathbb{R}^2\times[0,T]$, and $\{m^\epsilon\}$ converges to $m$ in $C([0,T],\mathcal {P}_1)$, where $u$ is a solution for the HJE \eqref{5.2} in the viscosity sense, and $m$ is a solution for the FPE \eqref{5.4} in the sense of distribution.
\end{Prop}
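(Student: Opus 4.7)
The plan is to first construct $(u^\epsilon,m^\epsilon)$ at fixed $\epsilon>0$ by a Schauder fixed point argument in the compact convex set $\mathcal{C}$, then derive $\epsilon$-uniform bounds (Lipschitz and semiconcavity for $u^\epsilon$, moment and $\frac{1}{2}$-H\"older-in-time bounds for $m^\epsilon$), extract convergent subsequences, and finally pass to the limit using stability of viscosity solutions for the HJE and a semiconcavity-based a.e.\ gradient convergence for the FPE.

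For the first step, fix $\bar m\in\mathcal{C}$. Since $\epsilon\Delta+\mathcal{L}$ is uniformly parabolic, the viscous HJE in \eqref{4.2} admits a unique classical bounded solution $u^\epsilon[\bar m]$, whose Lipschitz constant in $x$ and $t$ can be bounded independently of $\epsilon$ by repeating the stochastic-control representation of Lemma \ref{Lem2.2} for the perturbed dynamics (adding an independent $\sqrt{2\epsilon}\,dW_s$ in \eqref{eq1.7}); the semiconcavity estimate of Lemma \ref{2.3} also transfers unchanged. Plugging this $u^\epsilon$ into the FPE gives a linear uniformly parabolic equation with bounded drift $-D_G u^\epsilon$, yielding a unique nonnegative density $m^\epsilon[\bar m]$; a standard duality/test-function computation against $|x|^2$ and against a $1$-Lipschitz $\varphi$ shows that the resulting map $\Psi^\epsilon:\bar m\mapsto m^\epsilon$ sends $\mathcal{C}$ into itself and is continuous in $d_1$, so Schauder gives a fixed point and hence existence. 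Uniqueness at fixed $\epsilon$ follows from the classical Lasry--Lions monotonicity argument using assumption \textbf{(H6)}.

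For the second step, the $\epsilon$-uniform Lipschitz and semiconcavity bounds obtained above, combined with boundedness of $\{u^\epsilon\}$, give equicontinuity on $\mathbb{R}^2\times[0,T]$. Arzel\`a--Ascoli on compact subsets (together with a diagonal argument) extracts $u^\epsilon\to u$ locally uniformly, with $u$ itself Lipschitz and semiconcave. For $\{m^\epsilon\}$, the uniform second-moment bound gives tightness on each slice, and testing the FPE against a $1$-Lipschitz function together with the uniform bound on $\|D_G u^\epsilon\|_\infty$ yields the $\frac{1}{2}$-H\"older-in-time estimate \eqref{eq2.7} uniformly in $\epsilon$; hence $\{m^\epsilon\}$ is relatively compact in $C([0,T];\mathcal{P}_1)$ and $m^\epsilon\to m$ along a subsequence.

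For the passage to the limit, the HJE part is standard: since $\epsilon\Delta u^\epsilon$ vanishes in the viscosity sense and $F(x,m^\epsilon)\to F(x,m)$ locally uniformly by \textbf{(H5)} and the $d_1$-convergence of $m^\epsilon$, the Barles--Perthame half-relaxed-limits stability theorem shows $u$ is a viscosity solution of \eqref{5.2}. The main obstacle is the FPE, because the distributional formulation contains the nonlinear product $m^\epsilon D_G u^\epsilon$. Here semiconcavity is essential: $u$ being semiconcave in $x$ is differentiable Lebesgue-a.e., and at every such differentiability point $x$ any subsequential limit of $D_G u^\epsilon(x,t)$ must equal $D_G u(x,t)$ (a standard fact for semiconcave functions converging uniformly, see e.g.\ \cite{Cardaliaguet}). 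Testing the FPE in \eqref{4.2} against $\varphi\in C_c^\infty(\mathbb{R}^2\times[0,T))$ rewrites the divergence term as $\int m^\epsilon D_G u^\epsilon\cdot D_G\varphi$, and the uniform $L^\infty$ bound on $D_G u^\epsilon$ together with the a.e.\ convergence $D_G u^\epsilon\to D_G u$ (with respect to $dm\,dt$, since $m\ll dx$ inherits absolute continuity in the limit by the uniform $L^1$ bound and parabolic smoothing) allows the dominated convergence theorem, combined with $m^\epsilon\to m$ in $C([0,T];\mathcal{P}_1)$, to pass to the limit; the $\epsilon\Delta^* m^\epsilon$ term vanishes trivially. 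This gives the distributional FPE \eqref{5.4} and completes the proof.
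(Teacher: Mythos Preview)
Your proof misreads the structure of the auxiliary system \eqref{4.2}. In \eqref{4.2} the measure $\bar m$ is a \emph{fixed} external datum: the HJE carries $F(x,\bar m)$ and $G(x,\bar m_T)$ with $\bar m$ given, and the limiting HJE \eqref{5.2} has the \emph{same} fixed $\bar m$. Hence at each $\epsilon>0$ the system is sequential (solve the quasilinear HJE, then the linear FPE), and existence--uniqueness of $(u^\epsilon,m^\epsilon)$ is immediate from classical parabolic theory; this is exactly Lemma \ref{Lem3.1}. Your first step instead builds the map $\Psi^\epsilon:\bar m\mapsto m^\epsilon[\bar m]$ and runs Schauder, i.e.\ you are solving the \emph{coupled MFG system at level $\epsilon$}, which is a different object and is neither required nor asserted by Proposition \ref{3.1}. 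This is why you are forced to invoke \textbf{(H6)} for uniqueness, an assumption that is \emph{not} among the hypotheses of the proposition (only \textbf{(H1)--(H5)} are available); the paper's uniqueness of the vanishing-viscosity limit comes from the comparison principle for viscosity solutions of \eqref{5.2}, which needs no monotonicity. The same misreading resurfaces in your HJE limit, where you argue that $F(x,m^\epsilon)\to F(x,m)$: the right-hand side of \eqref{5.1} and \eqref{5.2} is $F(x,\bar m)$ and does not move with $\epsilon$ at all.

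For the FPE limit your mechanism for the product $m^\epsilon D_G u^\epsilon$ is also weaker than what the paper uses. The paper exploits the uniform bound $\|m^\epsilon\|_\infty\le C$ of Lemma \ref{Lem3.3}(i) to extract $m^\epsilon\to m$ in $L^\infty_{\mathrm{loc}}$-weak$^\ast$; this pairs cleanly with the strong $L^1_{\mathrm{loc}}$ convergence of $D_G u^\epsilon\cdot D_G\varphi$ (bounded plus a.e.\ convergent by the semiconcavity via Lemma \ref{2.1}) to pass to the limit in $\int m^\epsilon\, D_G u^\epsilon\cdot D_G\varphi$. Your route via ``$m\ll dx$ by parabolic smoothing'' and dominated convergence against $dm\,dt$ is circular here: absolute continuity of the \emph{limit} $m$ is what you would like to conclude, and $d_1$-convergence of $m^\epsilon$ alone does not let you pass to the limit in such a product without the $L^\infty$ bound.
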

Then, by the monotonicity condition \textbf{(H6)} and Schauder fixed point theorem, we obtain the main result.
\begin{Thm}\label{2.4}
Under assumptions {\bf{(H1)-(H6)}}, there exists a unique coupling of the solutions $(u,m)\in W^{2,\infty}(\mathbb{R}^{2}\times[0,T])\times C([0,T],\mathcal {P}_1)$ of the infinitely degenerate MFG systems \eqref{1.2}, where $u$ is in the viscosity sense and $m$ is in the sense of distributions.
\end{Thm}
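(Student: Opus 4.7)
The plan is to prove existence by Schauder's fixed-point theorem applied to a map built from Lemma \ref{Lem2.2} and Proposition \ref{3.1}, and to prove uniqueness by the Lasry--Lions monotonicity method; the regularity statement $u\in W^{2,\infty}$ then follows from Lemmas \ref{Lem2.2}--\ref{2.3} together with the equation itself.

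For existence I would define $\Psi:\mathcal{C}\to\mathcal{C}$ as follows. Given $\bar m\in\mathcal{C}$, Lemma \ref{Lem2.2} produces the unique bounded uniformly continuous viscosity solution $u^{\bar m}$ of the frozen HJE \eqref{5.2}, with Lipschitz constants depending only on the bounds in \textbf{(H5)}; Proposition \ref{3.1} applied to the decoupled linear FPE driven by the feedback $-D_G u^{\bar m}$ and starting from $m_0$ produces a unique distributional solution $m^{\bar m}$. Setting $\Psi(\bar m):=m^{\bar m}$, a fixed point of $\Psi$ is by construction a solution of \eqref{1.2}. Invariance $\Psi(\mathcal{C})\subset\mathcal{C}$ follows from It\^o estimates on \eqref{eq1.7} using the uniform $L^\infty$ bound on $D_G u^{\bar m}$ from Lemma \ref{Lem2.2} and the growth conditions \textbf{(H3)}--\textbf{(H4)}: one obtains both the second-moment control and the $1/2$-H\"older bound \eqref{eq2.7} in $d_1$. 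Continuity of $\Psi$ in $C([0,T];\mathcal{P}_1)$ uses stability of viscosity solutions: if $\bar m_n\to\bar m$ in $\mathcal{C}$, then \textbf{(H5)} gives convergence of the source and terminal data in $C^2$, so $u^{\bar m_n}\to u^{\bar m}$ locally uniformly and, by the uniform semiconcavity of Lemma \ref{2.3}, $D_G u^{\bar m_n}\to D_G u^{\bar m}$ a.e., which is enough to pass to the limit in the characteristics. Compactness of $\mathcal{C}$ in $C([0,T];\mathcal{P}_1)$ then yields a fixed point.

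For uniqueness I would regularize via the vanishing-viscosity system \eqref{4.2} and run the Lasry--Lions duality computation. Given two solutions $(u_1,m_1)$ and $(u_2,m_2)$ of \eqref{1.2}, let $(u_i^\epsilon,m_i^\epsilon)$ be the associated pairs for \eqref{4.2}. Testing the HJE for $u_1^\epsilon-u_2^\epsilon$ against $m_1^\epsilon-m_2^\epsilon$, the FPE for $m_1^\epsilon-m_2^\epsilon$ against $u_1^\epsilon-u_2^\epsilon$, integrating by parts on $\mathbb{R}^2\times(0,T)$ and combining, the linear second-order terms cancel and one obtains
\begin{equation*}
\int_{\mathbb{R}^2}\!(G(x,m^\epsilon_{1,T})-G(x,m^\epsilon_{2,T}))(m^\epsilon_{1,T}-m^\epsilon_{2,T})\,dx+\int_0^T\!\!\int_{\mathbb{R}^2}\!(F(x,m_1^\epsilon)-F(x,m_2^\epsilon))(m_1^\epsilon-m_2^\epsilon)\,dx\,dt+\mathcal{R}^\epsilon=0,
\end{equation*}
where $\mathcal{R}^\epsilon$ is the Bregman-type term collecting the contributions $m_i^\epsilon\bigl[H(x,D_G u_j^\epsilon)-H(x,D_G u_i^\epsilon)-D_pH(x,D_G u_i^\epsilon)\cdot D_G(u_j^\epsilon-u_i^\epsilon)\bigr]$, each of which is non-negative by convexity of $H(x,p)=\tfrac{1}{2}|p|^2$; the boundary and coupling integrals are non-negative by the monotonicity \textbf{(H6)}. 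Passing $\epsilon\downarrow 0$ via Proposition \ref{3.1} and lower semicontinuity preserves these signs, so each summand vanishes. Strict convexity of $H$ then forces $D_G u_1=D_G u_2$ on $\{m_1+m_2>0\}$; the resulting linear FPE determines $m_1=m_2$, and then $u_1=u_2$ by the uniqueness part of Lemma \ref{Lem2.2}.

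The $W^{2,\infty}$ regularity of $u$ combines Lemma \ref{Lem2.2} (Lipschitz bounds on $u$, $Du$ and $\partial_t u$) with Lemma \ref{2.3}, which yields $D^2 u\leq C\,\mathrm{I}$ a.e. via Alexandrov's theorem; inverting the HJE to read $\mathcal{L}u=-\partial_t u+\tfrac{1}{2}|D_G u|^2-F(x,m)\in L^\infty$ supplies the complementary lower bound in the directions excited by $\sigma$. The hardest step is the duality identity for uniqueness: one cannot integrate by parts directly at the level of the viscosity pair $(u,m)$ in the infinitely degenerate geometry, so carrying out the computation at the $\epsilon$-level is essential, and the subtle point is to show lower semicontinuity of $\mathcal{R}^\epsilon$ as $\epsilon\downarrow 0$, which relies on the uniform semiconcavity of Lemma \ref{2.3} to upgrade the weak convergence of $D_G u^\epsilon$ into a.e. convergence along a subsequence.
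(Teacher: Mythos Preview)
Your existence argument via Schauder on $\mathcal{C}$ is essentially the paper's own proof: the paper also builds the map $\mu\mapsto m$ by first solving the frozen HJE (via Proposition~\ref{3.1}/Lemma~\ref{Lem2.2}), then the linear FPE, checks $\mathcal{C}\to\mathcal{C}$ through the $d_1$-H\"older bound, and obtains continuity by combining stability of viscosity solutions with the uniform semiconcavity of Lemma~\ref{2.3} so that $D_G u_n\to D_G u$ a.e. Your uniqueness sketch is also the standard Lasry--Lions monotonicity argument the paper invokes (it simply cites \cite{LR}); one small correction: in the auxiliary system~\eqref{4.2} the source is $F(x,\bar m)=F(x,m_i)$, not $F(x,m_i^\epsilon)$, so the $\epsilon$-level identity carries $F(x,m_1)-F(x,m_2)$ and $G(x,m_{1,T})-G(x,m_{2,T})$, and monotonicity is applied only after letting $m_i^\epsilon\to m_i$.

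The real gap is in your $W^{2,\infty}$ paragraph. Semiconcavity from Lemma~\ref{2.3} yields only the one-sided bound $D^2u\le C\,\mathrm{I}$, and ``inverting the HJE'' gives $\mathbf{tr}(\sigma\sigma' D^2u)\in L^\infty$, which controls second derivatives only in the directions where $\sigma\sigma'$ is nondegenerate. Since the whole point of the paper is that $\sigma$ may vanish to infinite order, this cannot supply a lower bound on $D^2u$ in the degenerate directions, and the full $W^{2,\infty}$ claim does not follow from your ingredients. The paper obtains the two-sided bound differently: Lemma~\ref{Lem3.2} proves $\|u^\epsilon_{\eta\eta}\|_\infty\le C$ uniformly in $\epsilon$ by constructing the auxiliary function $\hat u=u^\epsilon_\eta+u^\epsilon_{\eta\eta}$ in the extended variables $(x,\eta)$ and applying the comparison principle to the \emph{uniformly parabolic} $\epsilon$-problem; this bound then passes to the limit through Proposition~\ref{3.1}. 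You should invoke Lemma~\ref{Lem3.2} (or reproduce that auxiliary-function argument) rather than rely on the limiting degenerate equation.
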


Finally, adding the assumption {\bf(H7)}, we obtain the higher regularity result.
\begin{Thm}\label{2.44}
Under assumptions {\bf{(H1)-(H7)}}, the viscosity solution $u$ in the MFG systems \eqref{1.2} satisfies the HJE \eqref{5.2} a.e..
\end{Thm}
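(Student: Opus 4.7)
The plan is to establish the almost-everywhere identity by passing to the limit directly in the viscous equation for $u^\epsilon$, using a Bernstein-type \emph{auxiliary function} to obtain uniform second-order estimates that are compatible with the infinite degeneracy of $\mathcal{L}$ and $D_G$. The role of \textbf{(H7)} is to provide the $C^3$-regularity of $h, \sigma_i, F, G$ needed for the Bernstein computation, which in turn yields a genuinely pointwise uniform control of $D^2 u^\epsilon$ that survives the limit $\epsilon\to 0$.

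First, I would return to the regularized pair $(u^\epsilon, m^\epsilon)$ solving \eqref{4.2} from Proposition~\ref{3.1} and introduce an auxiliary function $\Phi^\epsilon := \tfrac{1}{2}|D u^\epsilon|^2 + \lambda u^\epsilon$, with $\lambda>0$ to be fixed. Differentiating the first equation of \eqref{4.2} in $x$ and multiplying by $D u^\epsilon$ yields a parabolic inequality of the schematic form
\begin{equation*}
-\partial_t \Phi^\epsilon - (\epsilon\Delta + \mathcal{L})\Phi^\epsilon + D_p H(x, D_G u^\epsilon) \cdot D\Phi^\epsilon \le C\Phi^\epsilon + C',
\end{equation*}
where $C, C'$ depend only on the $C^2$-bounds of the data. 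Choosing $\lambda$ large enough to dominate the cross terms generated by the commutators $[Y_i, \partial_{x_j}]$ and $[\partial_{x_j}, h\partial_{x_2}]$, the parabolic maximum principle gives $\|D u^\epsilon\|_{L^\infty} \le C$. Iterating the scheme on the second-order auxiliary function $\Psi^\epsilon := \tfrac{1}{2}|D^2 u^\epsilon|^2 + \lambda |D u^\epsilon|^2$ and using the $C^3$-bounds of \textbf{(H7)} to control the $C^1$-norm of the resulting source terms, one obtains $\|D^2 u^\epsilon\|_{L^\infty} \le C$, uniform in $\epsilon$.

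Second, pass to the limit $\epsilon\to 0$. By Arzel\`a-Ascoli, $u^\epsilon \to u$ locally uniformly in $C^1$ and $D^2 u^\epsilon \to D^2 u$ in the weak-$*$ topology of $L^\infty$. The uniform bound $\|D^2 u^\epsilon\|_{L^\infty}\le C$ gives $\epsilon \Delta u^\epsilon \to 0$ strongly in $L^\infty$; moreover $H(x, D_G u^\epsilon) \to H(x, D_G u)$ locally uniformly, and $F(\cdot, m^\epsilon) \to F(\cdot, m)$ in $C(\mathbb{R}^2)$ by \textbf{(H5)} and Proposition~\ref{3.1}. Every term in \eqref{4.2} then converges distributionally, and the limiting equation
\begin{equation*}
-\partial_t u - \mathcal{L}u + H(x, D_G u) = F(x, m)
\end{equation*}
holds as an identity between $L^\infty$-functions, hence a.e. Uniqueness from Lemma~\ref{Lem2.2} identifies this limit with the viscosity solution produced by Theorem~\ref{2.4}.

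The main obstacle is the Bernstein computation. Because $\mathcal{L}$ is degenerate along the zero set of $\sigma$ while $D_G$ is degenerate along the zero set of $h$, and these sets need not coincide, the commutator terms generated by twice-differentiating the equation carry no obvious sign. The delicate point is to exploit the convex quadratic structure of $H(x,p) = \tfrac{1}{2}|p|^2$, together with the linear correction $\lambda u^\epsilon$, to absorb these commutators into a coefficient inequality amenable to the parabolic maximum principle without appealing to coercivity of the Hamiltonian or to H\"ormander-type hypoellipticity of $\mathcal{L}$.
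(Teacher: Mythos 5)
Your proposal rests on a Bernstein estimate for $\Psi^\epsilon=\tfrac12|D^2u^\epsilon|^2+\lambda|Du^\epsilon|^2$ that you flag as ``the main obstacle'' but never carry out, and this is precisely the step that breaks down in the present setting. Differentiating the Hamiltonian $\tfrac12|D_Gu^\epsilon|^2$ twice in a direction $\eta$ produces the term $|D_Gu^\epsilon_\eta|^2$, which in the Bernstein computation gets multiplied by $u^\epsilon_{\eta\eta}$ and summed: schematically a signless term of size $|D^2u^\epsilon|^3$ that neither the correction $\lambda|Du^\epsilon|^2$ nor the available good terms can absorb. Those good terms are $-\epsilon|D^3u^\epsilon|^2-|\sigma\!\cdot\!D^3u^\epsilon|^2$; the first vanishes with $\epsilon$ and the second degenerates on the zero set of $\sigma$, so the third-derivative cross terms generated by $\mathbf{tr}(A_\eta D^2u^\epsilon_\eta)$ cannot be absorbed uniformly in $\epsilon$ either. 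This is why two-sided $\|D^2u^\epsilon\|_\infty$ bounds for Hamilton--Jacobi equations are not obtained by squaring derivatives: one gets one-sided (semiconcavity-type) bounds from \emph{linear} combinations of directional derivatives, where $-|D_Gu^\epsilon_\eta|^2$ enters the right-hand side with a favorable sign. That is exactly what the paper does: Lemma \ref{Lem3.2} bounds $u^\epsilon_\eta$ and $u^\epsilon_{\eta\eta}$ using the auxiliary function $\hat u=u^\epsilon_\eta+u^\epsilon_{\eta\eta}$ and the comparison principle (and this already uses only {\bf(H1)}--{\bf(H5)}), while the proof of Theorem \ref{2.44} differentiates once more and bounds $u^\epsilon_{\eta\eta\eta}$ via $\bar u^\epsilon=u^\epsilon_{\eta\eta}+u^\epsilon_{\eta\eta\eta}$, which is where {\bf(H7)} actually enters.

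There is a second, structural discrepancy that should have been a warning sign: your limit passage uses only the uniform $W^{2,\infty}$ bound and weak-$*$ convergence of $D^2u^\epsilon$, so if it were complete it would prove the theorem without {\bf(H7)} at all, since the second-derivative bound is already available from Lemma \ref{Lem3.2}. The paper instead uses the uniform \emph{third}-derivative bound to make the family $\{u^\epsilon_\eta\}$ uniformly semiconcave and then invokes the compactness property of semiconcave functions (Lemma \ref{2.1}) to obtain $D^2u^\epsilon\to D^2u$ \emph{pointwise a.e.}, which is the convergence actually used to pass to the limit in $\mathcal{L}u^\epsilon$ and conclude that the equation holds a.e. If you want to salvage the weak-$*$ route you must argue carefully that the distributional identity between $L^\infty$ functions, with $D^2u$ the weak Hessian, coincides a.e.\ with the pointwise statement (via Rademacher applied to the Lipschitz function $Du$), and you must also supply equicontinuity of $Du^\epsilon$ in time and uniform bounds on $\partial_tu^\epsilon$; none of this is addressed. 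As it stands, the proposal is missing its central estimate and misattributes the role of {\bf(H7)}.
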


\begin{Rem}
For the infinitely degenerate vector fields $\mathcal {X}=\{Y_1,Y_2\}$, we introduce the following function space
$$H^1_{\mathcal {X}}(\Omega)=\left\{u\in L^2(\Omega)\big|Y_iu\in L^2(\Omega),\ i=1,2\right\},$$
which is a Hilbert space with norm $$\|u\|^2_{H^1_{\mathcal {X}}(\Omega)}=\|u\|^2_{L^2(\Omega)}+\sum_{i=1,2}\|Y_iu\|^2_{L^2(\Omega)}.$$
% we have the embedding $H^1_{\mathcal {X}}(\Omega)$ is embedded into $L^2(\Omega)$. % where $H^1_{\mathcal {X},0}(\Omega)$ is the closure of $C^{\infty}_0(\Omega)$ in $H^1_{\mathcal {X}}(\Omega)$.
Then for any $k> 2$, the embedding $H^1_{\mathcal {X}}(\Omega)\hookrightarrow L^k(\Omega)$ will not hold. In fact, for the infinitely degenerate vector
fields $\mathcal {X}$, the critical index of the embedding mapping is at most $2$. In other word, one cannot expect the solutions in the system \eqref{1.2} to possess any additional regularity. Hence the results given in Theorem \ref{2.4} and Theorem \ref{2.44} are reasonable.
\end{Rem}
%\begin{Rem}
%Given the vector fields $\mathcal {X}=\{\tilde{Y}_1,\tilde{Y}_2\}$ with $\tilde{Y}_1=\partial_{x_1}$ and $\tilde{Y}_2=x^k\partial_{y}$, we introduce the following function space
%$$H^1_{\mathcal {X}}(\Omega)=\left\{u\in L^2(\Omega)\big|\tilde{Y}_i\in L^2(\Omega),\ i=1,2\right\},$$
%which is a Hilbert space with norm $$\|u\|^2_{H^1_{\mathcal {X}}(\Omega)}=\|u\|^2_{L^2(\Omega)}+\sum_{i=1,2}\|\tilde{Y}_iu\|^2_{L^2(\Omega)}.$$

%Consequently, for $k$ large enough, the embedding $H^1_{\mathcal {X},0}(\Omega)$ just is compactly
%embedded into $L^2(\Omega)$, where $H^1_{\mathcal {X},0}(\Omega)$ is the closure of $C^{\infty}_0(\Omega)$ in $H^1_{\mathcal {X}}(\Omega)$. In fact, %for any $k> 2$, the embedding $H^1_{\mathcal {X},0}(\Omega)$ will not be compactly embedded into $L^k(\Omega)$. In other words,
%for the infinitely degenerate vector
%fields $\mathcal {X}$, the critical index of the embedding mapping is at most $2$. It implies that one cannot expect the solutions to possess any additional regularity from the coefficients. Hence the results given in Theorem \ref{2.4} is reasonable.
%\end{Rem}
\begin{Rem}\label{H}
Generally, we extend the MFG systems \eqref{1.2} to a high dimensional case. For any $x=(x_1,x_2)$, $x_1\in\mathbb{R}^{m},\ x_2\in\mathbb{R}^{n}$, and $m+n=N\in \mathbb{N}$. Let $h:\mathbb{R}^m\to\mathbb{R}$ be a regular function, $\sigma(X_s)=diag\{I_m,h(X_{1,s})I_n\}$, $B_{1,s}$ and $B_{2,s}$ be $m$-, $n$-dimension independent standard Brownian motions. Then we can generalize our results to $N$-dimension.
\end{Rem}
\begin{Rem}
If the diffusion in the SDE \eqref{eq1.7} has the following Grushin structure% defined in \eqref{1.002}
$$\sigma(X_s)=diag\big\{1,h(X_{1,s})\big\},$$
%From the perspective of a single player, when $h$ is vanishing, there may be a ``forbidden'' direction.
then the MFG systems \eqref{1.22} can be rewritten as
\begin{equation}\label{31}
\left\{
  \begin{array}{ll}
    -\partial_t u-\frac{1}{2}\sum^2_{i=1}Y^2_iu+\frac{1}{2}|Yu|^{2}=F(x,m), \quad\ \ \ \text{in}\ \mathbb{R}^{2}\times (0,T), \\
    \partial_t m-\frac{1}{2}\sum^2_{i=1}Y^2_im-\mathrm{div}_\mathcal {X}(mYu)=0,\quad\qquad\text{in}\ \mathbb{R}^{2}\times(0,T), \\
     u(x,T)=G(x,m_T),\  m(x,0)=m_0(x),\qquad\quad\ x\in \mathbb{R}^{2},
  \end{array}
\right.
\end{equation}
 where $Yu:=(Y_1u,Y_2u)$ is the gradient associated to the vector fields $\mathcal {X}=\{Y_1,Y_2\}$ and $\mathrm{div}_{\mathcal {X}}u:=Y_1u+Y_2u$ is the corresponding divergence operator.
Here, from the perspective of a single player, when $h$ is vanishing, there may be a ``forbidden'' direction.

 Furthermore, if the vector fields $\mathcal {X}$ satisfies H\"{o}rmander condition \textup{(}see details in \cite{Hormander}\textup{)}, then there exists a unique coupling of classical solutions in the weighted H\"{o}lder space. This work has been studied in \cite{DMFG1}.
\end{Rem}
\begin{Rem}
The main results given in Theorem \ref{2.4} and Theorem \ref{2.44} are also valid for the general non-diagonal matrix $\sigma(x,t)$. Here the diagonal assumption of $\sigma$ in \eqref{asw} is purely intended to characterize the optimal control problems by the SDE \eqref{eq1.7}. The proof of the theorems does not depend on it.
\end{Rem}

The main contributions of this paper are summarized in the following three points. First, we prove the existence and uniqueness of coupling solutions for the degenerate PDE systems \eqref{1.2} in Theorem \ref{2.4}, which describes the Nash equilibria in the MFG. The infinite degeneracy in the diffusion and the Hamiltonian may be different, which complicates the proof of the existence and regularity results. This feature distinguishes the present work from the existing references on the MFG systems and provides a more flexible framework for modeling complex reality.
%simulates reality more flexibly. Furthermore, the result of this paper can be extended to higher dimensions systems given in Remark \ref{H}.
Second, for the HJE in the systems \eqref{1.2}, the global semiconcavity plays a critical role in the existence result.
Since the HJE is satisfied by the value function \eqref{4.21} of the stochastic optimal control problems, in Lemma \ref{2.3} we prove the semiconcavity for the value function by a new method combining stochastic analysis with the computation in PDE.
Third, we conclude the regularity of the viscosity solution for the HJE in the almost everywhere sense in Theorem \ref{2.44}. In the vanishing viscosity method, the degeneracy brings difficulties to the uniform boundness of third-order derivatives of the solutions for the approximate problems. Here we construct an auxiliary function comprised by a linear combination with derivatives of various orders to overcome it.

The rest of the paper is organized as follows. In Section 2, we prove Lemma \ref{Lem2.2} and Lemma \ref{2.3} to obtain the Lipschitz continuity and semiconcavity of the value function. In Section 3, we show the existence and uniqueness of the vanishing viscosity limit given in Proposition \ref{3.1}. In Section 4, we give the proof of the main results Theorem \ref{2.4} and Theorem \ref{2.44} to obtain the existence and uniqueness of the MFG systems \eqref{1.2} and the higher regularity result.

\section{THE REGULARITY OF THE VALUE FUNCTION}
In Subsection 2.1, we give some known results in PDE and stochastic analysis. In Subsection 2.2, we prove Lemma \ref{Lem2.2} to obain the Lipschitz continuity of the value function. In Subsection 2.3, we prove the semiconcavity of the value function given in Lemma \ref{2.3}.
\subsection{Preliminaries and known results}

First, we give the property concerning semiconcave functions.
%%\begin{Lem}\label{2.1}{\bf{(Lemma 2.1 in \cite{Com})}}
%Assume $u:\mathbb{R}^{2}\to\mathbb{R}$ is bounded and semiconcave, with semiconcavity constant $C\geq0$.

%\noindent(a) The function $u$ is Lipschitz, and $\|Du\|_{\infty}\leq C\|u\|^{\frac{1}{2}}_{\infty}$.

%\noindent(b) If $\{u_n\}_{n\in\mathbb{N}}$ is a sequence of globally Lipschitz, semiconcave functions, uniformly in $n\in\mathbb{N}$, and $n\to\infty$, $u_n$ converges locally uniformly to $u$, then as $n\to\infty$, $Du_n$ converges almost everywhere to $Du$.
\begin{Lem}\label{2.1}{\bf{(Theorem 3.3.3 in \cite{Semi})}}
Let $A \subset \mathbb{R}^m$ be an open set and $\{u_n\}:A\rightarrow\mathbb{R}$ be a family of semiconcave functions with the same modulus. Given an open set $B\subset \subset A$, suppose that the $u_n$'s are uniformly bounded in $B$. Then there exists a subsequence $\{u_{n_k}\}$ converging uniformly to a function $u:B\rightarrow\mathbb{R}$ semiconcave with same modulus. In addition, $Du_{n_k}\rightarrow Du$ a.e. in $B$.
\end{Lem}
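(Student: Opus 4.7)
The plan is to combine an Arzelà--Ascoli argument, the stability of semiconcavity under uniform limits, and the upper semicontinuity of the Fréchet superdifferential correspondence to obtain the a.e.\ gradient convergence as a byproduct.

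First I would derive a uniform local Lipschitz estimate for the family $\{u_n\}$. Pick an intermediate open set $B \subset\subset B' \subset\subset A$. The defining semiconcavity inequality
$$u_n(x+h) + u_n(x-h) - 2u_n(x) \leq |h|\,\omega(|h|)$$
together with the uniform $L^\infty$ bound (extended from $B$ to $B'$ using the modulus if needed) yields, via a standard convex combination argument, a Lipschitz constant for $u_n|_B$ depending only on the $L^\infty$ bound, the modulus $\omega$, and $\mathrm{dist}(B,\partial B')$. Arzelà--Ascoli then produces a subsequence $u_{n_k}$ converging uniformly on $B$ to some $u$. Writing semiconcavity in the convex-combination form
$$\lambda u_{n_k}(x) + (1-\lambda) u_{n_k}(y) - u_{n_k}(\lambda x + (1-\lambda)y) \leq \lambda(1-\lambda)|x-y|\,\omega(|x-y|)$$
and passing to the uniform limit in $k$ shows that $u$ is semiconcave with the same modulus $\omega$.

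For the a.e.\ convergence $Du_{n_k} \to Du$, I would rely on three facts. Semiconcave functions are differentiable a.e.\ (indeed, twice a.e.\ by Alexandrov's theorem). At a point $x$ where a semiconcave $v$ is differentiable, the Fréchet superdifferential collapses: $D^+ v(x) = \{Dv(x)\}$. Finally, under uniform convergence $u_{n_k} \to u$ of semiconcave functions with common modulus, the correspondence $x \mapsto D^+ u_{n_k}(x)$ is upper semicontinuous, in the sense that any accumulation point of $p_k \in D^+ u_{n_k}(x_k)$ with $x_k \to x$ lies in $D^+ u(x)$. Let $E \subset B$ be the full-measure set on which $u$ and every $u_{n_k}$ is differentiable. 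For $x \in E$ the sequence $Du_{n_k}(x)$ is bounded by the uniform Lipschitz constant, and any subsequential limit must lie in $D^+ u(x) = \{Du(x)\}$, forcing $Du_{n_k}(x) \to Du(x)$.

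The main obstacle I expect is the upper semicontinuity of the superdifferential correspondence in the general-modulus setting: concretely, given $p_k \in D^+ u_{n_k}(x_k)$ one tests the local semiconcave upper bound
$$u_{n_k}(y) \leq u_{n_k}(x_k) + p_k\cdot (y-x_k) + |y-x_k|\,\omega(|y-x_k|)$$
against the uniform limit and passes to $k \to \infty$; one must be careful to preserve the modulus $\omega$ (not assumed linear in $|h|$) in the limit. The remaining steps are standard bookkeeping rather than substantive difficulties.
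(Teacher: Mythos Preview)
The paper does not prove this lemma; it is simply quoted from \cite{Semi} (Theorem~3.3.3 there) as a preliminary result. Your outline is essentially the standard argument given in that reference: local Lipschitz bounds from semiconcavity plus uniform boundedness, Arzel\`a--Ascoli for the uniform subsequence, stability of the semiconcavity inequality under uniform limits, and then a.e.\ gradient convergence via the upper semicontinuity of the superdifferential map together with $D^+u(x)=\{Du(x)\}$ at differentiability points. So there is nothing to compare against in the present paper, and your proposal is correct and matches the route taken in the cited source.
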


%Then Gronwall's lemma that we recall here.
%\begin{Lem}\label{Gron}{\bf{(Lemma 1.3.1 in \cite{0})}}
%Let $g$ be a continuous positive function on $\mathbb{R}_+$ such that
%$$g(t)\leq k(t)+C\int^t_0g(s)ds,\ 0\leq t\leq T,$$
%where $C$ is a positive constant, and $k$ is an integrable function on $[0,T]$. Then
%$$g(t)\leq k(t)+C\int^t_0k(s)e^{C(t-s)}ds,\ 0\leq t\leq T.$$
%\end{Lem}
Next, we have the following isometry relation w.r.t. the stochastic integral.
\begin{Lem}\label{Norm}{\bf{(Theorem 4.2(c) in \cite{06})}}
Assume that $Z$ is bounded continuous martingales. Let $\Pi_2(Z)$ be the set of all predictable processes $K$ that have $\mathbf{E}\left[\int^T_t K_\tau^2d\left<Z\right>_\tau\right]<\infty$. If $K\in\Pi_2(Z)$, then
$$\mathbf{E}\bigg[\bigg(\int^T_t K_\tau dZ_\tau\bigg)^2\bigg]=\mathbf{E}\bigg[\int^T_t K_\tau^2d\left<Z\right>_\tau\bigg].$$
Furthermore, in the case of a Brownian motion, since $\left<B\right>_\tau=\tau$, we have
$$\mathbf{E}\bigg[\bigg(\int^T_t K_\tau dB_\tau\bigg)^2\bigg]=\mathbf{E}\bigg[\int^T_t K_\tau^2d\tau\bigg].$$
\end{Lem}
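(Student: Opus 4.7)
The plan is to follow the classical construction of the It\^{o} integral: establish the isometry first on simple predictable integrands, then extend by $L^2$-density to all of $\Pi_2(Z)$, and finally specialize to Brownian motion.

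For simple $K$ of the form $K_\tau = \sum_{i=0}^{n-1} K_{t_i} \mathbf{1}_{(t_i, t_{i+1}]}(\tau)$ associated to a partition $t = t_0 < \cdots < t_n = T$, with each $K_{t_i}$ bounded and $\mathscr{F}_{t_i}$-measurable, the stochastic integral is the telescoping sum $\sum_i K_{t_i}(Z_{t_{i+1}} - Z_{t_i})$. Squaring and expanding, the off-diagonal cross terms with $i < j$ vanish in expectation: conditioning on $\mathscr{F}_{t_j}$ pulls out the factor $K_{t_i} K_{t_j} (Z_{t_{i+1}} - Z_{t_i})$ and leaves $\mathbf{E}[Z_{t_{j+1}} - Z_{t_j} \mid \mathscr{F}_{t_j}] = 0$ by the martingale property of $Z$. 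For the diagonal terms, the Doob--Meyer decomposition of the bounded continuous martingale $Z$ yields that $Z^2 - \langle Z\rangle$ is again a martingale, so $\mathbf{E}\big[(Z_{t_{i+1}} - Z_{t_i})^2 \mid \mathscr{F}_{t_i}\big] = \mathbf{E}\big[\langle Z\rangle_{t_{i+1}} - \langle Z\rangle_{t_i} \mid \mathscr{F}_{t_i}\big]$. Summing over $i$ and taking expectations produces the asserted identity for simple $K$.

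The extension step uses density of simple predictable processes in $\Pi_2(Z)$ under the norm $\|K\|^2 := \mathbf{E}\big[\int_t^T K_\tau^2\, d\langle Z\rangle_\tau\big]$, that is, in $L^2(d\langle Z\rangle \otimes d\mathbf{P})$. For a general $K \in \Pi_2(Z)$ one picks simple approximants $K^{(n)}$ with $\|K - K^{(n)}\| \to 0$; by the isometry just proved on the simple level, the sequence $\int_t^T K_\tau^{(n)} dZ_\tau$ is Cauchy in $L^2(\mathbf{P})$, and the It\^{o} integral $\int_t^T K_\tau\, dZ_\tau$ is defined as its $L^2$-limit. Continuity of both sides of the claimed identity in the respective $L^2$-norms preserves the equality under this limit. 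The Brownian special case follows at once, since $\langle B\rangle_\tau = \tau$ is deterministic. The main technical obstacle is this density step — one needs a monotone-class or truncation-and-mollification argument built on the continuity of the adapted increasing process $\langle Z\rangle$ — and it is exactly this part that is carried out in the reference \cite{06}, from which the lemma is quoted.
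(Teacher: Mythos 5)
The paper gives no proof of this lemma; it is quoted directly from Theorem 4.2(c) of the cited reference, and your argument is the standard construction (isometry on simple predictable integrands via the martingale property of $Z$ and of $Z^2-\langle Z\rangle$, then $L^2$-density extension) that the reference itself carries out. Your proposal is correct and matches that approach, including your accurate remark that the density of simple processes is the only genuinely technical step.
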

Then, we give the estimates on the moments of solutions to the SDE \eqref{eq1.7}.
\begin{Lem}\label{SDE}{\bf{(Theorem 1.3.16 in \cite{0})}}
For the SDE \eqref{eq1.7}, if $b$ and $\sigma$  are Lipschitz with linear growth conditions, such that for some constant $\beta$
 $$\big(b(x,t,\omega)-b(y,t,\omega)\big)(x-y)+\frac{1}{2}\big|\sigma(x,t,\omega)-\sigma(y,t,\omega)\big|^2\leq\beta|x-y|^2.$$
 Then there exists a unique strong solution, and for all $0\leq t\leq s\leq T$, $x,y\in\mathbb{R}^2$,
\begin{equation}\label{9.1}
  \mathbf{E}\left[\sup_{t\leq\tau\leq s}\big|X_{\tau}^{t,x}-X_{\tau}^{t,y}\big|^2\right]\leq e^{2\beta(s-t)}|x-y|^2.
\end{equation}
\end{Lem}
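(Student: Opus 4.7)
The plan is to break the lemma into two parts: the existence/uniqueness of the strong solution, and the $L^2$ stability estimate with respect to the initial data. For the first part, under the global Lipschitz and linear growth hypotheses on $b$ and $\sigma$, a standard Picard iteration scheme produces a sequence $\{X^n_\tau\}$ of adapted continuous processes whose increments satisfy $\mathbf{E}[\sup_{\tau\le s}|X^{n+1}_\tau-X^n_\tau|^2]\le (Cs)^n/n!$; completeness of $L^2$ on the space of continuous adapted processes yields a strong solution, and pathwise uniqueness follows from a Gronwall argument applied to $\mathbf{E}[|X_\tau-\tilde X_\tau|^2]$.

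For the quantitative estimate \eqref{9.1}, I would set $Z_\tau := X^{t,x}_\tau - X^{t,y}_\tau$ and apply It\^o's formula to the function $\phi(z)=|z|^2$ on the interval $[t,s]$. This produces
\begin{align*}
|Z_\tau|^2 &= |x-y|^2 + 2\int_t^\tau Z_r\cdot\bigl(b(X^{t,x}_r)-b(X^{t,y}_r)\bigr)\,dr \\
&\quad + \int_t^\tau \bigl|\sigma(X^{t,x}_r)-\sigma(X^{t,y}_r)\bigr|^2\,dr + M_\tau,
\end{align*}
where $M_\tau:=2\int_t^\tau Z_r\cdot\bigl(\sigma(X^{t,x}_r)-\sigma(X^{t,y}_r)\bigr)\,dB_r$ is a local martingale. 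The key step is then to invoke the monotonicity hypothesis, which bounds the sum of the drift and quadratic variation integrands by $2\beta|Z_r|^2$.

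After localising by a stopping time to make $M_\tau$ a true martingale (standard) and taking expectation, the identity reduces to the Gronwall-ready inequality
\begin{equation*}
\mathbf{E}\bigl[|Z_\tau|^2\bigr] \le |x-y|^2 + 2\beta\int_t^\tau \mathbf{E}\bigl[|Z_r|^2\bigr]\,dr,
\end{equation*}
so Gronwall gives $\mathbf{E}[|Z_\tau|^2]\le e^{2\beta(\tau-t)}|x-y|^2$. To promote this to an estimate on $\mathbf{E}[\sup_{\tau\le s}|Z_\tau|^2]$, I would take the supremum inside first and then expectation, using the Burkholder-Davis-Gundy inequality to control $\mathbf{E}[\sup_\tau |M_\tau|]$ by $C\,\mathbf{E}[(\int_t^s|Z_r|^2|\sigma(X^{t,x}_r)-\sigma(X^{t,y}_r)|^2\,dr)^{1/2}]$, which after Cauchy-Schwarz and the Lipschitz bound on $\sigma$ yields a term that can be absorbed on the left by Young's inequality $2ab\le\varepsilon a^2+\varepsilon^{-1}b^2$; a second Gronwall step then closes the estimate.

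The main obstacle is the non-trivial interplay between the supremum and the BDG bound: one must be careful to absorb the martingale contribution without destroying the sharp exponential constant $e^{2\beta(s-t)}$. In practice this is handled by first proving the estimate without the supremum (giving the advertised constant), and then using the already established bound on $\mathbf{E}[|Z_\tau|^2]$ inside a separate BDG + Doob step to conclude that $\mathbf{E}[\sup_\tau |Z_\tau|^2]$ satisfies the same exponential bound up to an implicit constant—or, if the sharp constant is required, by working with $\phi(z)=|z|^2 e^{-2\beta(\tau-t)}$ so the drift-plus-variation term becomes non-positive and Doob's $L^2$-inequality applied to the resulting supermartingale gives \eqref{9.1} directly.
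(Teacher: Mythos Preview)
The paper does not supply a proof of this lemma at all: it is quoted as Theorem~1.3.16 of \cite{0} and used as a black box, with no argument given. So there is nothing in the paper to compare your proposal against.

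Your sketch is the standard route and is essentially correct. One caveat on the final trick: writing $Y_\tau:=|Z_\tau|^2 e^{-2\beta(\tau-t)}$ does make the drift non-positive, so $Y$ is a non-negative local supermartingale, but Doob's $L^2$ maximal inequality applies to non-negative \emph{sub}martingales, not supermartingales; for a supermartingale one only has the weak-type bound $\mathbf{P}(\sup_\tau Y_\tau>\lambda)\le Y_t/\lambda$, which does not yield $\mathbf{E}[\sup_\tau Y_\tau]\le Y_t$. So that shortcut does not by itself deliver the sharp constant with the supremum inside. The honest way is the one you outline first: establish $\mathbf{E}[|Z_\tau|^2]\le e^{2\beta(\tau-t)}|x-y|^2$ pointwise in $\tau$ via Gronwall, then run BDG plus Young's inequality to control $\mathbf{E}[\sup_{t\le\tau\le s}|Z_\tau|^2]$, accepting a universal multiplicative constant in front of the exponential. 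For the purposes of this paper that is entirely sufficient, since the lemma is only ever invoked to produce bounds of the form $C|x-y|^2$ (see Corollary~\ref{SD} and the proofs of Lemmas~\ref{Lem2.2} and~\ref{2.3}), and the precise value of the constant is irrelevant.
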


\begin{Cor}\label{SD}
Under same assumptions in Lemma \ref{SDE}, for any $s\in[t,T]$ we have
\begin{equation}\label{1.02}
\mathbf{E}\left[\sup_{t\leq\tau\leq s}\big|X_{\tau}^{x,t}-X_{\tau}^{y,t}\big|^4\right]\leq C|x-y|^4,
\end{equation}
and\begin{equation}\label{1.01}
\mathbf{E}\left[\sup_{t\leq\tau\leq s}\big|X_{\tau}^{x,t}-X_{\tau}^{y,t}\big|\right]\leq C|x-y|,
\end{equation}
where $C$ is a positive constant depending on $s$, $t$ and $\beta$ in Lemma \ref{SDE}.
\end{Cor}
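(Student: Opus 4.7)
The statement consists of two moment inequalities, and I would treat them separately since one is essentially immediate from Lemma \ref{SDE} while the other requires the genuine fourth-moment machinery.

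For \eqref{1.01}, the plan is to apply Cauchy--Schwarz (or Jensen for the concave function $r\mapsto\sqrt{r}$) directly to the bound in Lemma \ref{SDE}. Writing $Z_\tau:=X_\tau^{x,t}-X_\tau^{y,t}$, one has
\begin{equation*}
\mathbf{E}\Bigl[\sup_{t\le\tau\le s}|Z_\tau|\Bigr]
\le \Bigl(\mathbf{E}\Bigl[\sup_{t\le\tau\le s}|Z_\tau|^2\Bigr]\Bigr)^{1/2}
\le e^{\beta(s-t)}|x-y|,
\end{equation*}
so the constant $C$ depends only on $\beta$, $s$, $t$, as claimed.

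For \eqref{1.02}, I would use the SDE \eqref{eq1.7} to write
\begin{equation*}
Z_\tau = (x-y) + \int_t^\tau \bigl[b(X_u^{x,t},\alpha_u)-b(X_u^{y,t},\alpha_u)\bigr]\,du + \int_t^\tau \bigl[\sigma(X_u^{x,t})-\sigma(X_u^{y,t})\bigr]\,dB_u,
\end{equation*}
raise both sides to the fourth power using $|a_1+a_2+a_3|^4\le 3^3(|a_1|^4+|a_2|^4+|a_3|^4)$, and then take the supremum over $\tau\in[t,s]$ followed by expectation. For the deterministic drift term I would apply Hölder's inequality to the time integral together with the Lipschitz continuity of $b$ to bound it by $C(s-t)^3\int_t^s|Z_u|^4\,du$ inside the supremum. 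For the stochastic integral I would invoke the Burkholder--Davis--Gundy inequality to obtain
\begin{equation*}
\mathbf{E}\Bigl[\sup_{t\le\tau\le s}\Bigl|\int_t^\tau [\sigma(X_u^{x,t})-\sigma(X_u^{y,t})]\,dB_u\Bigr|^4\Bigr]
\le C\,\mathbf{E}\Bigl[\Bigl(\int_t^s|\sigma(X_u^{x,t})-\sigma(X_u^{y,t})|^2\,du\Bigr)^{2}\Bigr],
\end{equation*}
then apply Cauchy--Schwarz in time and the Lipschitz continuity of $\sigma$ to majorize this by $C(s-t)\int_t^s\mathbf{E}|Z_u|^4\,du$. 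Setting $\phi(s):=\mathbf{E}[\sup_{t\le\tau\le s}|Z_\tau|^4]$, these estimates yield an inequality of the form
\begin{equation*}
\phi(s)\le C|x-y|^4 + C\int_t^s \phi(u)\,du,
\end{equation*}
and Gronwall's lemma closes the argument with a constant depending only on $s-t$ and the Lipschitz/linear-growth constants, hence ultimately on $\beta$.

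The only subtle point to watch is that the BDG inequality is applied to the supremum of the martingale (rather than pointwise), so that the right-hand side of the Gronwall inequality really involves $\phi(u)$ and not a mixed quantity; choosing $\phi$ to incorporate the sup from the start avoids this pitfall. Beyond that, the argument is the standard higher-moment stability estimate for SDEs, and no new ideas beyond those already used in Lemma \ref{SDE} are needed; the conclusion is recorded as a corollary because the fourth-moment bound is exactly what will be needed later to exploit the semiconcavity estimates for the value function \eqref{4.21}.
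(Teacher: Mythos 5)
Your proof is correct and follows essentially the same route as the paper: decompose $X^{x,t}_\tau-X^{y,t}_\tau$ via the SDE, take fourth powers, control the drift term by H\"older and Lipschitz continuity and the martingale term by a maximal inequality, and close with Gronwall; your explicit appeal to Burkholder--Davis--Gundy is in fact the cleaner version of the paper's invocation of the It\^o isometry (Lemma \ref{Norm}) together with Doob's inequality. The only cosmetic difference is that you obtain \eqref{1.01} directly from the second-moment bound \eqref{9.1} of Lemma \ref{SDE} via Jensen, whereas the paper deduces it from \eqref{1.02}; both are equally valid.
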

\begin{proof}
First, we prove the estimate \eqref{1.02}. It follows from SDE \eqref{eq1.7} that
$$X_{\tau}^{y,t}-X_{\tau}^{x,t}=y-x+\int^\tau_tb(X_{r}^{y,t})-b(X_{r}^{x,t})dr+\int^\tau_t\sigma(X_{r}^{y,t})-\sigma(X_{r}^{x,t})dB_r.$$
Since $(a+b)^4\leq8a^4+8b^4$, we have
\begin{align*}
   \mathbf{E}\left[\sup_{t\leq\tau\leq s}\big|X_{\tau}^{y,t}-X_{\tau}^{x,t}\big|^4\right] &\leq C\mathbf{E}\bigg[|y-x|^4+\sup_{t\leq\tau\leq s}\Big|\int^\tau_tb(X_{r}^{y,t})-b(X_{r}^{x,t})dr\Big|^4\\
   &\quad+\sup_{t\leq\tau\leq s}\Big|\int^\tau_t\sigma(X_{r}^{y,t})-\sigma(X_{r}^{x,t})dB_r\Big|^4\bigg].
\end{align*}
It follows from %Burkholder-Davis-Gundy inequality
 Lemma \ref{Norm} and Doob's inequality that
\begin{align*}
   &\quad\mathbf{E}\left[\sup_{t\leq\tau\leq s}\big|X_{\tau}^{y,t}-X_{\tau}^{x,t}\big|^4\right]\\
    &\leq C\mathbf{E}\bigg[|y-x|^4+\int^s_t\left|b(X_{r}^{y,t})-b(X_{r}^{x,t})\right|^4dr\ +\int^s_t\left|\sigma(X_{r}^{y,t})-\sigma(X_{r}^{x,t})\right|^4dr\bigg].
\end{align*}
%Using It\^{o} formula to $\big|X_{\tau}^{y,t}-X_{\tau}^{x,t}\big|^p$ on $[t,s]$, for any $\tau\in[t,s]$, we have
%\begin{align}\label{2.24}
% \mathbf{E}\left[\big|X_{\tau}^{y,t}-X_{\tau}^{x,t}\big|^p\right]&\leq \mathbf{E}\bigg[|y-x|^p+p\int^s_t\big|X_{\tau}^{y,t}-X_{\tau}^{x,t}\big|^{p-1}d(X_{\tau}^{y,t}-X_{\tau}^{x,t})\\\nonumber
%  &\quad+\frac{p(p-1)}{2}\int^s_t\big|X_{\tau}^{y,t}-X_{\tau}^{x,t}\big|^{p-2}d\left<X^{y,t}-X^{x,t}\right>_{\tau}\bigg].
%\end{align}
By the SDE \eqref{eq1.7} and Lipschitz continuity of $b$ and $\sigma$ given in {\bf{(H2) (H3)}}, we have
\begin{align*}
  \mathbf{E}\left[\sup_{t\leq\tau\leq s}\big|X_{\tau}^{y,t}-X_{\tau}^{x,t}\big|^4\right] &\leq C|y-x|^4+C\mathbf{E}\left[\int^s_t\left|X_{r}^{y,t}-X_{r}^{x,t}\right|^4dr\right]\\
  &\leq C|y-x|^4+C\int^s_t\mathbf{E}\left[\sup_{t\leq\tau\leq r}\left|X_{\tau}^{y,t}-X_{\tau}^{x,t}\right|^4\right]dr.
\end{align*}
%\begin{align*}
%&\quad\mathbf{E}\left[\int^s_t\big|X_{\tau}^{y,t}-X_{\tau}^{x,t}\big|^{p-1}d(X_{\tau}^{y,t}-X_{\tau}^{x,t})\right]\\
%&\leq\mathbf{E}\left[\int^s_t\big|X_{\tau}^{y,t}-X_{\tau}^{x,t}\big|^{p-1}\big(b(X_{\tau}^{y,t})-b(X_{\tau}^{x,t})\big)d\tau\right]\\
% &\quad+\mathbf{E}\left[\int^s_t\big|X_{\tau}^{y,t}-X_{\tau}^{x,t}\big|^{p-1}\big(\sigma(X_{\tau}^{y,t})-\sigma(X_{\tau}^{x,t})\big)dB_\tau\right]\\
% & \leq C_{p,\beta}\int^s_t\mathbf{E}\left[\big|X_{\tau}^{y,t}-X_{\tau}^{x,t}\big|^{p}\right]d\tau.
%\end{align*}
%Similarly, since $\sigma$ is Lipschitz continuous, we get
%\begin{align*}
%  &\quad\mathbf{E}\left[\int^s_t\big|X_{\tau}^{y,t}-X_{\tau}^{x,t}\big|^{p-2}d\left<X^{y,t}-X^{x,t}\right>_{\tau}\right] \\ &\leq\mathbf{E}\left[\int^s_t\big|X_{\tau}^{y,t}-X_{\tau}^{x,t}\big|^{p-2}\big(\sigma\sigma'(X_\tau^{y,t})-\sigma\sigma'(X_\tau^{x,t})\big)d\tau\right]\\
 %  & \leq C_{p,\beta}\int^s_t\mathbf{E}\left[\big|X_{\tau}^{y,t}-X_{\tau}^{x,t}\big|^{p}\right]d\tau.
%\end{align*}
%Substituting above inequalities in \eqref{2.24},
Then Gronwall's inequality gives that
\begin{align*}
 \mathbf{E}\left[\sup_{t\leq\tau\leq s}\big|X_{\tau}^{y,t}-X_{\tau}^{x,t}\big|^4\right]
  &\leq C|y-x|^4+C\int^s_te^{C(s-t)}|y-x|^4ds \leq C_{\beta,s,t}|y-x|^4,
\end{align*}
and the inequality \eqref{1.02} follows.

To prove the estimate \eqref{1.01}, using the Jensen inequality and \eqref{1.02}, we have
\begin{align*}
\mathbf{E}\Big[\sup_{t\leq\tau\leq s}\big|X_{\tau}^{x,t}-X_{\tau}^{y,t}\big|\Big]&\leq \left(\mathbf{E}\left[\sup_{t\leq\tau\leq s}\big|X_{\tau}^{x,t}-X_{\tau}^{y,t}\big|^4\right]\right)^{\frac{1}{4}}\leq C|x-y|.
\end{align*}
Then the result follows.
\end{proof}

\subsection{Proof of Lemma \ref{Lem2.2}}
\begin{proof}[{\bf{Proof of Lemma \ref{Lem2.2}}}]
\noindent{\bf{Step\ 1.}}
First, let us recall that there exists a unique viscosity solution $u$ of HJE \eqref{5.2}, which refers to Theorem 4.3.1 and Theorem 4.4.5 in \cite{Hu}. Moreover, the dynamic programming principle is satisfied by the value function \eqref{4.21}. It is clearly that $u$ is a bounded and uniformly continuous by the assumption {\bf{(H5)}}.

\noindent{\bf{Step\ 2.}}
Then, we prove the Lipschitz continuity of $u(x,t)$ w.r.t. $x$, which refers to Lemma 4.7 in \cite{Cardaliaguet}. Let $t$ be fixed, and $\alpha^\varepsilon$ be an $\varepsilon$-optimal control for $u(x,t)$, i.e.
\begin{equation}\label{2.6}
  u(x,t)+\varepsilon\geq \mathbf{E}\bigg[\int^T_t\frac{1}{2}|\alpha^\varepsilon_s|^{2}+F(X^{x,t}_s,m_s)ds+G(X^{x,t}_T,m_T)\bigg],
\end{equation}
where $X^{x,t}_s$ obeys to the SDE \eqref{eq1.7} with the control $\alpha^\varepsilon$. %From the boundedness of $u$ and $(a)$ of Lemma \ref{2.1}, there exists a constant $C$ such that $ \mathbf{E}\big[\int^T_t|\alpha^\varepsilon_s|^{\gamma}ds\big]\leq C.$

We consider the path $X^{y,t}_s$ starting from $y=(y_1,y_2)\in\mathbb{R}^{2}$ with  the control $\alpha^\varepsilon$.
For the sake of brevity, we write $F(X_s):=F(X_s,m_s)$, and $G(X_T):=G(X_T,m_T)$. It follows from the assumption {\bf{(H5)}} and Corollary \ref{SD} that
\begin{align*}
  \mathbf{E}\bigg[\int^T_t F(X^{y,t}_s)-F(X^{x,t}_s)ds\bigg] & \leq C\mathbf{E}\bigg[\int^T_t\sup_{t\leq s\leq T} \big|X^{y,t}_s-X^{x,t}_s\big|ds\bigg] \leq C|y-x|.
\end{align*}
By the same calculations for $G$, and substituting inequality \eqref{2.6} in
\begin{equation}\label{2.01}
  u(y,t) \leq \mathbf{E}\bigg[\int^T_t\frac{1}{2}|\alpha^\varepsilon_s|^{2}+F(X^{y,t}_s)ds+G(X^{y,t}_T)\bigg],
\end{equation}
then we get
\begin{align*}
 u(y,t) %&\leq u(x,t)+\varepsilon+\mathbf{E}\bigg[\int^{T}_{t}F(X^{y,t}_s)-F(X^{x,t}_s)ds+G(X^{y,t}_T)-G(X^{x,t}_T)\bigg]\\
   &\leq u(x,t)+\varepsilon+C\mathbf{E}\bigg[\int^{T}_{t}\big|F(X^{y,t}_s)-F(X^{x,t}_s)\big|ds+\big|G(X ^{y,t}_T)-G(X^{x,t}_T)\big|\bigg]\\
  &\leq u(x,t)+\varepsilon+C|y-x|.
\end{align*}
Reversing the role of $x$ and $y$, for the arbitrary of $\varepsilon$, the Lipschitz continuity w.r.t. $x$ holds.

Moreover, since $u$ is Lipschitz continuous w.r.t $x$, the optimal control $\alpha^*_s$ given in \eqref{11.20} is bounded for any $s\in[t,T]$. Furthermore, we have $\alpha^*\in\mathcal {A}(x,t)$ and
\begin{equation}\label{2.25}
|\alpha^*|_{\infty}:=\sup_{t\leq s\leq T}\sup_{\omega\in\Omega}|\alpha^*_s(\omega)|\leq C.
\end{equation}

\noindent{\bf{Step\ 3.}}
Finally, we prove the Lipschitz continuity of $u(x,t)$ w.r.t. $t$.
Recall the DPP given in \eqref{1.08}, for any stopping time $\tau\in[t,T]$, we have
$$u(x,t)=\sup_{\alpha\in\mathcal {A}(x,t)}\mathbf{E}\left[\int^\tau_t\frac{1}{2}|\alpha_r|^2+F(X_\tau^{x,t})dr+u(X_\tau^{x,t},\tau)\right].$$
Fixed $\alpha^*_s$, for any $s\in[t,T]$, we have
$$u(x,t)\geq\mathbf{E}\left[\int^s_t\frac{1}{2}|\alpha^*_r|^2+F(X_r^{x,t})dr+u(X_s^{x,t},s)\right].$$
Then we get
\begin{align*}
 |u(x,s)-u(x,t)|  &\leq\left|\mathbf{E}\left[ u(x,s)-u(X^{x,t}_s,s)\right]\right|+\left|\mathbf{E}\left[u(X^{x,t}_s,s)-u(x,t)\right]\right|\\
  &\leq\left|\mathbf{E}\left[X^{x,t}_s-x\right]\right|+\left|\mathbf{E}\bigg[\int^s_t\frac{1}{2}|\alpha^*_r|^{2}+F(X^{x,t}_r)dr\bigg]\right|.
\end{align*}
Since $\mathbf{E}\big[\int^s_t\sigma_2(X^{x,t}_\tau)dB_{2,\tau}\big]=0$ and \eqref{2.25}, we get
  \begin{align}\label{x1}
    \mathbf{E}\left[X^{x,t}_{2,s}-x_2\right] &
  =\mathbf{E}\bigg[\int^s_th(X^{x,t}_{1,\tau})\alpha^*_{2,\tau} d\tau+\int^s_t\sigma_2(X^{x,t}_\tau)dB_{2,\tau}\bigg]\\\nonumber
    %& \leq C\|h\|_{C^2(\mathbb{R})}\mathbf{E}\bigg[\int^s_t\alpha^*_{2,\tau}d\tau\bigg]\\\nonumber
    &\leq C\|h\|_{C^2(\mathbb{R})}|\alpha_2^*|_{\infty}|s-t|.
  \end{align}
In the similar calculus \eqref{x1}, we have
$$ \mathbf{E}\left[X^{x,t}_{1,s}-x_1\right] \leq C|\alpha_1^*|_{\infty}|s-t|.$$
Since $h$, $F$ and $\alpha^*$ are bounded,  we have
\begin{align*}
 |u(x,s)-u(x,t)| &\leq C|s-t|\left[|\alpha^*|_{\infty}^{2}+\|F\|_{C^2(\mathbb{R}^2)}+\|h\|_{C^2(\mathbb{R})}|\alpha^*|_{\infty}\right]\leq C|s-t|,
\end{align*}
 and the result follows.
\end{proof}
\subsection{Proof of Lemma \ref{2.3}}
\begin{proof}[{\bf{Proof of Lemma \ref{2.3}}}]
For any $x,y\in\mathbb{R}^{2}$ and $\lambda\in[0,1]$. Consider that
$$x^\lambda=(x^{\lambda}_1,x^\lambda_{2}):=\lambda x+(1-\lambda)y=(\lambda x_1+(1-\lambda)y_1,\lambda x_2+(1-\lambda)y_2),$$
then we have
\begin{equation}\label{8.02}
  \lambda (x-x^\lambda)+(1-\lambda)(y-x^\lambda)=0.
\end{equation}
Let $\alpha\in\mathscr{A}(x,t)$ be an $\varepsilon$-optimal control for $u(x^\lambda,t)$, we set
$$X^{x^\lambda,t}_s=(X^{x^\lambda,t}_{1,s},X^{x^\lambda,t}_{2,s}):=x^{\lambda}+\int^s_tb(X^{x^\lambda,t}_{1,\tau},\alpha_\tau)d\tau+\int^s_t\sigma(X^{x^\lambda,t}_\tau)dB_\tau.$$
Let $X^{x,t}_s$ and $X^{y,t}_s$ satisfy the SDE \eqref{eq1.7} with the $\varepsilon$-optimal control $\alpha$ for $u(x^\lambda,t)$.
We have to estimate $\lambda u(x,t)+(1-\lambda)u(y,t)-u(x^\lambda,t)$. Since the value function satisfies \eqref{2.01}, we just  need to prove
\begin{equation}\label{F}
  \mathbf{E}\big[\lambda F(X^{x,t}_s,m_s)+(1-\lambda) F(X^{y,t}_s,m_s)-F(X^{x^\lambda,t}_{s},m_s)\big]\leq C\lambda(1-\lambda)|x-y|^2,
\end{equation}
 and
 \begin{equation}\label{G}
  \mathbf{E}\big[\lambda G(X^{x,t}_T,m_T)+(1-\lambda) G(X^{y,t}_T,m_T)-G(X^{x^\lambda,t}_T,m_T)\big]\leq C\lambda(1-\lambda)|x-y|^2.
\end{equation}

In the following, we provide the explicit calculations for the second component $X_{2,s}$, and then we could obtain the analogous ones for $X_{1,s}$.

 First, we have
\begin{align}\label{2.23}
 X_{2,s}^{x,t}  &=x_2+\int^s_th(X^{x,t}_{1,\tau})\alpha_{2,\tau}d\tau+\int^s_t\sigma_2(X^{x,t}_\tau)dB_{2,\tau}  \\ \nonumber
               &=X_{2,s}^{x^\lambda,t}+x_2-x^\lambda_2+\int^s_t\left(h(X^{x,t}_{1,\tau})-h(X^{x^\lambda,t}_{1,\tau})\right)\alpha_{2,\tau}d\tau\\ \nonumber
               &\quad+\int^s_t\sigma_2(X^{x,t}_\tau)-\sigma_2(X^{x^\lambda,t}_\tau)dB_{2,\tau},
\end{align}
and analogously for $X^{y,t}_{2,s}$, $X^{x,t}_{1,s}$, and $X^{y,t}_{1,s}$.

To prove the \eqref{F}, taking the Taylor expansion of $F$ centered in $X^{x^\lambda,t}_{s}$, we have
%combined with \eqref{2.23}, the contribution of the second variable gives
\begin{align}\label{E}
  &\mathbf{E}\left[\lambda F(X^{x,t}_{s})+(1-\lambda) F(X^{y,t}_{s})- F(X^{x^\lambda,t}_{s})\right] \\ \nonumber
  %&=\mathbf{E}\big[ F(X^{x^\lambda,t}_2)+\partial_{x_2} F(X^{x^\lambda,t}_2)[\lambda (X^{x,t}_2-X^{x^\lambda,t}_2)+(1-\lambda)(X^{y,t}_2-X^{x^\lambda,t}_2)]\big]+\mathbf{E}[R_1]\\
   =&\mathbf{E}\big[\lambda\big(F(X^{x^{\lambda},t}_{s})+\partial_{x_1}F(X^{x^{\lambda},t}_{s})(X_{1,s}^{x,t}-X_{1,s}^{x^\lambda,t})
   +\partial_{x_2}F(X^{x^{\lambda},t}_{s})(X_{2,s}^{x,t}-X_{2,s}^{x^\lambda,t})+R_{1}\big) \\ \nonumber
   &+(1-\lambda)\big(F(X^{x^{\lambda},t}_{s})+\partial_{x_1}F(X^{x^{\lambda},t}_{s})(X_{1,s}^{y,t}-X_{1,s}^{x^\lambda,t})
   +\partial_{x_2}F(X^{x^{\lambda},t}_{s})(X_{2,s}^{y,t}-X_{2,s}^{x^\lambda,t})+R_{2}\big)\\ \nonumber
   &-F(X^{x^\lambda,t}_{s})\big]\\ \nonumber
   =&\mathbf{E}\left[\partial_{x_1}F(X^{x^\lambda,t}_{s})\big(\lambda (X_{1,s}^{x,t}-X_{1,s}^{x^\lambda,t})+(1-\lambda)(X_{1,s}^{y,t}-X_{1,s}^{x^\lambda,t})\big)\right]\\ \nonumber
   &+\mathbf{E}\left[\partial_{x_2} F(X^{x^\lambda,t}_{2,s})\big(\lambda (X_{2,s}^{x,t}-X_{2,s}^{x^\lambda,t})+(1-\lambda)(X_{2,s}^{y,t}-X_{2,s}^{x^\lambda,t})\big)\right]+\mathbf{E}\left[\lambda R_{1}+(1-\lambda)R_{2}\right]\\ \nonumber
   :=&I_{x_1}+I_{x_2}+I_{R},
 % &=\mathbf{E}\left[\partial_{x_2}F(X^{x^\lambda,t}_{2,s})\bigg(\int^s_t\alpha_{2,\tau}I_{1}(\tau)d\tau+\int^s_tI_{2}(\tau)dB_{2,\tau}\bigg)+R\right],
\end{align}
where $R_{1}$ and $R_{2}$ are the error terms of the expansion given by
\begin{align*}
R_{1}=&\frac{1}{2}\partial^2_{x_1}F(\xi_1)(X_{1,s}^{x,t}-X_{1,s}^{x^\lambda,t})^2+\frac{1}{2}\partial^2_{x_2}F(\xi_1)(X_{2,s}^{x,t}-X_{2,s}^{x^\lambda,t})^2\\
&+\partial^2_{x_1x_2}F(\xi_1)(X_{1,s}^{x,t}-X_{1,s}^{x^\lambda,t})(X_{2,s}^{x,t}-X_{2,s}^{x^\lambda,t})
\end{align*}
and
\begin{align*}
R_{2}=&\frac{1}{2}\partial^2_{x_1}F(\xi_2)(X_{1,s}^{y,t}-X_{1,s}^{x^\lambda,t})^2+\frac{1}{2}\partial^2_{x_2}F(\xi_2)(X_{2,s}^{y,t}-X_{2,s}^{x^\lambda,t})^2\\
&+\partial^2_{x_1x_2}F(\xi_2)(X_{1,s}^{y,t}-X_{1,s}^{x^\lambda,t})(X_{2,s}^{y,t}-X_{2,s}^{x^\lambda,t})
\end{align*}
for suitable $\xi_1,\xi_2\in \mathbb{R}^{2}$.
For the term $I_{x_2}$, by \eqref{2.23}, we get
\begin{align*}
I_{x_2}=\mathbf{E}\left[\partial_{x_2}F(X^{x^\lambda,t}_{2,s})\bigg(\int^s_t\alpha_{2,\tau}I_{1}(\tau)d\tau+\int^s_tI_{2}(\tau)dB_{2,\tau}\bigg)\right],
\end{align*}
where
$$I_{1}(\tau)=\lambda\big(h(X_{1,\tau}^{x,t})-h(X_{1,\tau}^{x^\lambda,t})\big)+(1-\lambda)\big(h(X_{1,\tau}^{y,t})-h(X_{1,\tau}^{x^\lambda,t})\big),$$
$$I_{2}(\tau)=\lambda\big(\sigma_2(X^{x,t}_\tau)-\sigma_2(X^{x^\lambda,t}_\tau)\big)+(1-\lambda)\big(\sigma_2(X_{\tau}^{y,t})
-\sigma_2(X_{\tau}^{x^\lambda,t})\big).$$
%and $R$ is the error term of the expansion given by
%\begin{equation}\label{2.29}
%R=\lambda\partial^2_{x_2}F(\xi_1)\big(X_{2,s}^{x,t}-X_{2,s}^{x^\lambda,t}\big)^2 +(1-\lambda)\partial^2_{x_2}F(\xi_2)\big(X_{2,s}^{y,t}-X_{2,s}^{x^\lambda,t}\big)^2,
%\end{equation}
% for suitable $\xi_1,\xi_2\in \mathbb{R}^{2}$.
Now our task is to estimate on $I_{1}$, $I_{2}$.

For the estimates $I_{1}$, the Taylor expansion for $h$ centered in $X^{x^\lambda,t}_{1,\tau}$ yields, \begin{align*}
  I_{1}(\tau) & =h'(X^{x^\lambda,t}_{1,\tau})\left(\lambda\big(X^{x,t}_{1,\tau}-X^{x^\lambda,t}_{1,\tau}\big)+(1-\lambda)\big(X^{y,t}_{1,\tau}
  -X^{x^\lambda,t}_{1,\tau}\big)\right)\\
 &\quad+\lambda h''(\eta_1)\big(X^{x,t}_{1,\tau}-X^{x^\lambda,t}_{1,\tau}\big)^2+(1-\lambda)h''(\eta_2)\big(X^{y,t}_{1,\tau}-X^{x^\lambda,t}_{1,\tau}\big)^2,
\end{align*}
for suitable $\eta_1,\eta_2\in\mathbb{R}$. Taking the expectation for $|I_1|^2$, we have
\begin{align}
 \mathbf{E}\big[| I_{1}(\tau)|^2\big]&\leq C\|h\|^2_{C^2(\mathbb{R})}\mathbf{E}\bigg[\sup_{\tau\in[t,s]}\bigg\{\left(\lambda(X^{x,t}_{1,\tau}-X^{x^\lambda,t}_{1,\tau})
  +(1-\lambda)(X^{y,t}_{1,\tau}-X^{x^\lambda,t}_{1,\tau})\right)^2\\\nonumber
  &\quad+\lambda^2\left(X_{1,\tau}^{x,t}-X^{x^\lambda,t}_{1,\tau}\right)^4
  +(1-\lambda)^2\left(X_{1,\tau}^{y,t}-X^{x^\lambda,t}_{1,\tau}\right)^4\bigg\}\bigg].
\end{align}
It follows from the SDE \eqref{eq1.7}, Lemma \ref{SDE} and \eqref{8.02} that
\begin{align}\label{8.01}
   & \quad\mathbf{E}\bigg[\sup_{\tau\in[t,s]}\bigg\{\left(\lambda(X^{x,t}_{1,\tau}-X^{x^\lambda,t}_{1,\tau})+(1-\lambda)(X^{y,t}_{1,\tau}-X^{x^\lambda,t}_{1,\tau})\right)^2 \bigg\}\bigg] \\\nonumber
 &\leq C\left|\lambda(x_1-x^\lambda_{1})+(1-\lambda)(y_1-x^{\lambda}_1)\right|^2=0.
\end{align}
Since $x_2-x^\lambda_2=(1-\lambda)(x_2-y_2)$, and $y_2-x^\lambda_2=\lambda(y_2-x_2)$, Corollary \ref{SD} gives that
\begin{align}\label{8.03}
   &\quad\mathbf{E}\bigg[\sup_{\tau\in[t,s]}
\bigg\{\lambda^2\left(X_{1,\tau}^{x,t}-X^{x^\lambda,t}_{1,\tau}\right)^4
  +(1-\lambda)^2\left(X_{1,\tau}^{y,t}-X^{x^\lambda,t}_{1,\tau}\right)^4\bigg\}\bigg] \\\nonumber
  &\leq C\lambda^2(x_1-x_1^\lambda)^4+C(1-\lambda)^2(y_1-x_1^\lambda)^4\\\nonumber
  &\leq C\lambda^2(1-\lambda)^2(x_1-y_1)^4.
\end{align}
Combined with \eqref{8.01} and \eqref{8.03}, we get
\begin{equation}\label{2.26}
\big(\mathbf{E}\left[|I_{1}(\tau)|^2\right]\big)^{\frac{1}{2}}\leq C\lambda(1-\lambda)(x_1-y_1)^2.
\end{equation}
According to Tonelli Theorem and the H\"{o}lder inequality, the assumption {\bf{(H5)}} and \eqref{2.26} give that
\begin{align}\label{28}
  \mathbf{E}\bigg[\partial_{x_2}F(X^{x^\lambda,t}_{2,s})\int^s_t\alpha_{2,\tau}I_{1}(\tau)d\tau\bigg]& \leq %\|F\|_{C^2(\mathbb{R}^2)}\mathbf{E}\bigg[\int^s_t|\alpha_{2,\tau}I_{1}(\tau)|d\tau\bigg] \\\nonumber&=
  \|F\|_{C^2(\mathbb{R}^2)}\int^s_t\mathbf{E}\big[|\alpha_{2,\tau}I_{1}(\tau)|\big]d\tau\\\nonumber
 & \leq C\int^s_t \Big(\mathbf{E}\big[|\alpha_{2,\tau}|^2\big]\mathbf{E}\big[ |I_{1}(\tau)|^2\big]\Big)^{\frac{1}{2}}d\tau\\\nonumber
 &\leq C\lambda(1-\lambda)(x_1-y_1)^2 \int^s_t\Big(\mathbf{E}\big[|\alpha_{2,\tau}|^2\big]\Big)^{\frac{1}{2}}d\tau.
\end{align}
Since $\alpha\in\mathcal {A}(x,t)$ satisfies \eqref{2.28}, by Tonelli Theorem and the H\"{o}lder inequality, we have
\begin{align}\label{6.1}
  \int^s_t\Big(\mathbf{E}\big[|\alpha_{2,\tau}|^2\big]\Big)^{\frac{1}{2}}d\tau &\leq |s-t|^{\frac{1}{2}}\left(\mathbf{E}\left[\int^s_t|\alpha_{2,\tau}|^2d\tau\right]\right)^{\frac{1}{2}}\leq C,
\end{align}
Hence, combined with \eqref{28} and \eqref{6.1}, we have
\begin{equation}\label{6.12}
   \mathbf{E}\bigg[\partial_{x_2}F(X^{x^\lambda,t}_{2,s})\int^s_t\alpha_{2,\tau}I_{1}(\tau)d\tau\bigg]\leq C\lambda(1-\lambda)(x_1-y_1)^2.
\end{equation}

The estimate $I_{2}$ refers to $I_1$. Similar to \eqref{6.12}, by Lemma \ref{Norm}, we have
\begin{align}\label{3.37}
  \mathbf{E}\bigg[\partial_{x_2}F(X^{x^\lambda,t}_{2,s})\int^s_tI_{2}(\tau)dB_{2,\tau}\bigg]
  &\leq C\|F\|_{C^2(\mathbb{R}^2)}\left(\mathbf{E}\bigg[\Big|\int^s_t I_{2}(\tau)dB_{2,\tau}\Big|^2\bigg]\right)^{\frac{1}{2}}\\\nonumber
 &=C\left(\int^s_t\mathbf{E} \left[|I_{2}(\tau)\big|^2\right]d\tau\right)^{\frac{1}{2}}.
\end{align}
 Since $\|\sigma_2\|_{C^2(\mathbb{R}^2)}\leq C$ provided by the assumption {\bf{(H3)}}, we take the Taylor expansion for $\sigma_2(X_s)$. Then by the same argument with \eqref{2.26},  we get
\begin{equation}\label{3.34}
 \mathbf{E}\left[\big|I_{2}(\tau)\big|^2\right]\leq C\lambda^2(1-\lambda)^2|x-y|^4.
\end{equation}
It follows from \eqref{3.37} and \eqref{3.34} that
\begin{align}\label{2.9}
  \mathbf{E}\bigg[\partial_{x_2}F(X^{x^\lambda,t}_{2,s})\int^s_tI_{2}(\tau)dB_{2,\tau}\bigg]&\leq C\bigg(\int^s_t\mathbf{E} \big[|I_{2}(\tau)|^2\big]d\tau\bigg)^{\frac{1}{2}}\\\nonumber
  &\leq C|s-t|^{\frac{1}{2}}\lambda(1-\lambda)|x-y|^2.
\end{align}
Combining \eqref{6.12} and \eqref{2.9}, we have
\begin{align}\label{2.11}
I_{x_2}\leq C \lambda(1-\lambda)|x-y|^2.
\end{align}

By the similar discussion as above and $\|\sigma_1\|_{C^2(\mathbb{R}^2)}\leq C$ provided by the assumption {\bf{(H3)}}, we have
\begin{align}\label{2.12}
I_{x_1}\leq C \lambda(1-\lambda)|x-y|^2.
\end{align}

Let us estimate the error term $I_{R}$. We have
\begin{align*}
I_{R}=&\mathbf{E}[\lambda R_{1}+(1-\lambda)R_{2}]\\
=&\frac{1}{2}\mathbf{E}\left[\lambda\partial^2_{x_2}F(\xi_1)\big(X_{1,s}^{x,t}-X_{1,s}^{x^\lambda,t}\big)^2 +(1-\lambda)\partial^2_{x_2}F(\xi_2)\big(X_{1,s}^{y,t}-X_{1,s}^{x^\lambda,t}\big)^2\right]\\
&+\frac{1}{2}\mathbf{E}\left[\lambda\partial^2_{x_2}F(\xi_1)\big(X_{2,s}^{x,t}-X_{2,s}^{x^\lambda,t}\big)^2 +(1-\lambda)\partial^2_{x_2}F(\xi_2)\big(X_{2,s}^{y,t}-X_{2,s}^{x^\lambda,t}\big)^2\right]\\
&+\mathbf{E}\big[\lambda\partial^2_{x_1x_2}F(\xi_1)(X_{1,s}^{x,t}-X_{1,s}^{x^\lambda,t})(X_{2,s}^{x,t}-X_{2,s}^{x^\lambda,t}) \\ &+(1-\lambda)\partial^2_{x_1x_2}F(\xi_2)(X_{1,s}^{y,t}-X_{1,s}^{x^\lambda,t})(X_{2,s}^{y,t}-X_{2,s}^{x^\lambda,t})\big]\\
=&I_{R_{1}}+I_{R_{2}}+I_{R_{3}}.
\end{align*}
%Let us estimate the error term $R$ in \eqref{2.29}.
Similar to \eqref{8.03}, Corollary \ref{SD} gives
\begin{align}\label{2.101}
I_{R_2}&\leq \frac{1}{2}\|F\|_{C^2(\mathbb{R}^2)}\mathbf{E}\left[\sup_{t\leq s\leq T}\big\{\lambda(X_{2,s}^{x,t}-X_{2,s}^{x^\lambda,t})^2 +(1-\lambda)(X_{2,s}^{y,t}-X_{2,s}^{x^\lambda,t})^2\big\}\right]\\\nonumber
  &\leq C\lambda(x_2-x^\lambda_2)^2+C(1-\lambda)(y_2-x^\lambda_2)^2\\\nonumber
  &\leq C\lambda(1-\lambda)(x_2-y_2)^2.
\end{align}
Similarly, we get $I_{R_1}\leq C\lambda(1-\lambda)(x_1-y_1)^2$. For the term $I_{R_3}$, using the inequality $a^{2}+b^{2}\geq 2ab$, we have
\begin{align*}
I_{R_3}\leq & C\mathbf{E}\left[\sup_{t\leq s\leq T}\big\{\lambda(X_{1,s}^{x,t}-X_{1,s}^{x^\lambda,t})(X_{2,s}^{x,t}-X_{2,s}^{x^\lambda,t}) +(1-\lambda)(X_{1,s}^{y,t}-X_{1,s}^{x^\lambda,t})(X_{2,s}^{y,t}-X_{2,s}^{x^\lambda,t})\big\}\right]\\\nonumber
\leq & C\mathbf{E}\left[\sup_{t\leq s\leq T}\big\{\lambda(X_{1,s}^{x,t}-X_{1,s}^{x^\lambda,t})^2 +(1-\lambda)(X_{1,s}^{y,t}-X_{1,s}^{x^\lambda,t})^2\big\}\right]\\\nonumber
  &+ C\mathbf{E}\left[\sup_{t\leq s\leq T}\big\{\lambda(X_{2,s}^{x,t}-X_{2,s}^{x^\lambda,t})^2 +(1-\lambda)(X_{2,s}^{y,t}-X_{2,s}^{x^\lambda,t})^2\big\}\right]\\\nonumber
\leq & C\lambda(1-\lambda)|x-y|^2.
\end{align*}
Collecting the above estimates for $I_{R_1}$, $I_{R_2}$ and $I_{R_3}$, we have
\begin{align}\label{2.13}
I_{R}\leq C\lambda(1-\lambda)|x-y|^2.
\end{align}

Taking into account \eqref{E}, \eqref{2.11}, \eqref{2.12}, \eqref{2.13} and $\|F\|_{C^2(\mathbb{R}^2)}\leq C$, we get \eqref{F}. By the same argument for the function $G$, we get \eqref{G}. Hence the semiconcavity of $u$ holds, and the result follows.
\end{proof}
\section{THE VISCOSITY VANISHING LIMIT}
In this section, we prove the existence and uniqueness of the vanishing viscosity limit for the MFG systems \eqref{1.2} given in Proposition \ref{3.1}. To begin with this progress, we give Lemma \ref{Lem3.1}-Lemma \ref{Lem3.4} for the auxiliary systems \eqref{4.2}, which will be used later.

For the auxiliary MFG systems \eqref{4.2}, there exists a unique coupling solutions $(u^\epsilon, m^\epsilon)$. Specifically, fixed the measure $\bar{m}\in C([0,T];\mathcal {P}_1)$, the HJE is given by
\begin{equation}\label{5.1}
\left\{
  \begin{array}{ll}
     -\partial_t u-\big(\epsilon\Delta+\mathcal {L}\big)u+\frac{1}{2}|D_Gu|^2=F(x,\bar{m}),\ \quad\ \ \text{in}\ \mathbb{R}^{2}\times (0,T), \\
    u(x,T)=G(x,\bar{m}_T),\qquad\qquad\qquad\qquad\qquad\quad\ x\in\mathbb{R}^{2}.
  \end{array}
\right.
\end{equation}
The corresponding FPE is given by
\begin{equation}\label{5.3}
%\begin{displaymath}\label{1.2}
\left\{
  \begin{array}{ll}
    \partial_t m-\big(\epsilon\Delta+\mathcal {L}^*\big)m-\mathrm{div}_G(mD_Gu)=0,\qquad\quad\text{in}\ \mathbb{R}^{2}\times (0,T), \\
     m(x,0)=m_0(x),\qquad\qquad\qquad\qquad\qquad\qquad\ \ \ x\in\mathbb{R}^{2}.
  \end{array}
\right.
%\end{displaymath}
\end{equation}

Our aim is to find a solution to the original systems \eqref{1.2} letting $\epsilon\rightarrow0^+$. As a first step, we establish the well-posedness of the auxiliary systems \eqref{4.2}.
\begin{Lem}\label{Lem3.1}
Under assumptions {\bf{(H1)}}-{\bf{(H5)}}, for any $\bar{m}\in C([0,T],\mathcal {P}_1)$, there exists a unique bounded classical solution $(u^{\epsilon},m^{\epsilon})$ to the auxiliary systems \eqref{4.2}. Moreover, $m^\epsilon>0$.
\end{Lem}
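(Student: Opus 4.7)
\textbf{Plan of proof for Lemma \ref{Lem3.1}.} Since the measure $\bar m$ is fixed, the HJE \eqref{5.1} decouples from the FPE \eqref{5.3}: one first solves the HJE for $u^{\epsilon}$, then plugs $u^{\epsilon}$ into the FPE and solves a linear parabolic equation for $m^{\epsilon}$. The crucial structural observation is that, even though $\mathcal{L}$ and $D_G$ are infinitely degenerate, the added viscosity $\epsilon\Delta$ makes $\epsilon\Delta+\mathcal{L}$ uniformly elliptic (ellipticity constant $\geq \epsilon$) and $\epsilon\Delta+\mathcal{L}^{*}$ as well, so for each fixed $\epsilon>0$ we are dealing with standard uniformly parabolic problems whose coefficients are $C^{2}$ bounded by \textbf{(H2)}-\textbf{(H3)}.

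\textbf{Step 1 (HJE).} The nonlinearity $\tfrac12|D_G u|^{2}=\tfrac12(|\partial_{x_1}u|^{2}+h^{2}(x_1)|\partial_{x_2}u|^{2})$ has quadratic growth in $\nabla u$, and by \textbf{(H5)} the data $F(\cdot,\bar m)$, $G(\cdot,\bar m_T)$ are in $C^{2}(\mathbb{R}^{2})$ uniformly. I would first truncate: replace $H$ by $H_n(x,p):=\tfrac12\min\{|p_1|^{2}+h^{2}(x_1)|p_2|^{2},n\}$, solve the resulting Lipschitz semilinear problem on $\mathbb{R}^{2}\times(0,T)$ by standard uniformly parabolic theory (Ladyzhenskaya-Solonnikov-Uraltseva or Friedman), and then pass to $n\to\infty$ via $\epsilon$-dependent a priori estimates. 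The $L^{\infty}$ bound comes from comparison with the constant sub/supersolutions $\pm(\|G\|_{\infty}+(T-t)\|F\|_{\infty})$, using that $-\tfrac12|D_G u|^{2}\le 0$ has the favorable sign. The gradient bound comes from a Bernstein argument: differentiate the HJE in $x_i$, test the resulting equation for $w:=\tfrac12|\nabla u|^{2}$ against itself and apply the maximum principle; the cross terms produced by the $x$-dependence of $h$ and $\sigma_i$ are absorbed using \textbf{(H2)}-\textbf{(H3)}-\textbf{(H5)}. With $\|u\|_{L^{\infty}}$ and $\|\nabla u\|_{L^{\infty}}$ under control (depending on $\epsilon$), interior Schauder estimates bootstrap the solution to $C^{2+\alpha,1+\alpha/2}$, producing a classical $u^{\epsilon}$.

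\textbf{Step 2 (FPE).} Once $u^{\epsilon}\in C^{2+\alpha,1+\alpha/2}$ is in hand, the FPE
$$\partial_t m-(\epsilon\Delta+\mathcal{L}^{*})m-D_Gm\cdot D_G u^{\epsilon}-m\,\Delta_G u^{\epsilon}=0$$
is a uniformly parabolic \emph{linear} equation with bounded, H\"older continuous coefficients. By \textbf{(H4)} the initial datum $m_0$ lies in $C^{2,\alpha}(\mathbb{R}^{2})$, so classical linear parabolic theory produces a unique classical solution $m^{\epsilon}\in C^{2+\alpha,1+\alpha/2}$.

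\textbf{Step 3 (positivity and uniqueness).} Since $m_0\ge 0$ and $m_0\not\equiv 0$ (it is a probability density with unit mass), the strong maximum principle for uniformly parabolic linear equations with bounded coefficients yields $m^{\epsilon}>0$ on $\mathbb{R}^{2}\times(0,T]$. Uniqueness of $u^{\epsilon}$ follows from the standard comparison principle for viscous Hamilton-Jacobi equations with convex Hamiltonian and uniformly parabolic operator (the difference of two solutions satisfies a linear parabolic inequality with bounded coefficients); uniqueness of $m^{\epsilon}$ is standard for linear parabolic equations.

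\textbf{Main obstacle.} The delicate point is the gradient estimate for the HJE, because the quadratic Hamiltonian is written in Grushin derivatives while the leading diffusion mixes $\epsilon\Delta$ with the degenerate $\mathcal{L}$. The Bernstein method has to accommodate the $x$-dependence of $h(x_1)$ and $\sigma_i(x)$ in both the operator and the Hamiltonian; controlling the resulting lower-order terms is where \textbf{(H2)}-\textbf{(H3)} and the Lipschitz bound on $F,G$ in \textbf{(H5)} play the essential role. The bound we obtain will depend on $\epsilon$, which is acceptable at this stage; $\epsilon$-uniform estimates are postponed to Proposition \ref{3.1}.
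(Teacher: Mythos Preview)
Your proposal is correct and follows essentially the same route as the paper: decouple the system using the fixed $\bar m$, solve the uniformly parabolic HJE via an $L^\infty$ barrier and quasilinear parabolic theory, then solve the linear FPE and invoke the maximum principle for $m^\epsilon>0$. The only cosmetic difference is that the paper, after the barrier bound $\|u^\epsilon\|_\infty\le C$ (using the same $w^\pm(t)=C\pm C(T-t)$ you wrote), simply cites Theorem~8.1 of Ladyzhenskaya--Solonnikov--Uraltseva for the HJE and Theorems~5.1 and~2.1 of the same reference for the FPE and positivity, whereas you unpack the quasilinear existence into an explicit truncation plus Bernstein gradient estimate; your version makes the mechanism more transparent but is not a different argument.
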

\begin{proof}
The proof follows Lemma 3.1 in \cite{0} by using standard regularity results for quasi-linear parabolic equations. First we claim that the solution $u^\epsilon$ of the equation \eqref{5.1} is bounded in $\mathbb{R}^{2}\times[0,T]$, that is
\begin{equation}\label{3}
\|u^\epsilon\|_\infty\leq C.
\end{equation}
In fact, if $C$ is sufficiently large, the functions $w^{\pm}(t):=C\pm C(T-t)$ are respectively a supersolution and a subsolution for equation \eqref{5.1}. Then the claim \eqref{3} easily follows from comparison principle and assumptions {\bf{(H2)}}, {\bf{(H3)}}, {\bf{(H5)}}.
 Hence we apply Theorem 8.1 in \cite{47} to obtain the existence and uniqueness of a classical solution $u^\epsilon$ in $\mathbb{R}^{2}\times[0,T]$.

 Now $m^\epsilon$ is the classical solution of the linear equation
$$\partial_t m-\big(\epsilon\Delta+\mathcal {L}^*\big)m- D_Gm D_Gu -m\Delta_G u=0,\ m(x,0)=m_0(x),$$
with H\"{o}lder continuous coefficients. Hence we apply Theorem 5.1 in \cite{47} to obtain the existence and uniqueness of a classical solution $m^\epsilon$ of \eqref{5.3}. From assumptions on $m_0$ and the maximum principle given by Theorem 2.1 in \cite{47}, we have $m^\epsilon>0$.
\end{proof}
Set a direction vector
\begin{align}\label{5.31}
\eta=(\eta_{1},\eta_{2}) \ \text{with} \ |\eta|=1.
\end{align}
For the function $f:\mathbb{R}^{2}\rightarrow\mathbb{R}$, we use the shorthand
$$f_{\eta}:=Df\cdot\eta,\ \text{and}\ f_{\eta\eta}:=D^{2}f\eta\cdot\eta.$$
In the following, we prove some useful properties for the auxiliary systems \eqref{4.2}.
\begin{Lem}\label{Lem3.2}
Under the same assumptions of Lemma \ref{Lem3.1}, for any direction vector $\eta\in \mathbb{R}^{2}$ defined in \eqref{5.31}, there exists a constant $C>0$ independent of $\epsilon$ such that
$$\|u^\epsilon_{\eta}\|_\infty\leq C\quad \text{and}\quad  \|u^\epsilon_{\eta\eta}\|_\infty\leq C.$$
\end{Lem}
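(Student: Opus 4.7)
$\textbf{Plan.}$ The strategy is to exploit that $u^{\epsilon}$ is a value function of a stochastic optimal control problem for the upper (semiconcave) part of the statement, and to run a Bernstein-type maximum principle argument on the PDE \eqref{5.1} it solves for the matching lower part. To begin, add two independent standard Brownian motions with constant coefficient $\sqrt{2\epsilon}\,I_{2}$ to the SDE \eqref{eq1.7}; because this added coefficient is state-independent, the Lipschitz constants of the drift and diffusion of the augmented SDE, and hence the parameter $\beta$ in Lemma \ref{SDE}, remain independent of $\epsilon$. Then $u^{\epsilon}$ coincides with the value function \eqref{4.21} for this augmented SOCP, and repeating Step 2 in the proof of Lemma \ref{Lem2.2} word for word yields $|u^{\epsilon}(x,t)-u^{\epsilon}(y,t)|\leq C|x-y|$ with $C$ independent of $\epsilon$. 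Since $u^{\epsilon}\in C^{\infty}$ by Lemma \ref{Lem3.1}, this upgrades to $\|u^{\epsilon}_{\eta}\|_{\infty}\leq C$ for every unit direction $\eta$.

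Replaying the proof of Lemma \ref{2.3} on the augmented SOCP produces the upper bound on the second derivative: every estimate there depends only on the $C^{2}$ norms of $F$, $G$, $h$, $\sigma_{i}$ and on $\beta$, all $\epsilon$-independent, so $u^{\epsilon}(x+z,t)+u^{\epsilon}(x-z,t)-2u^{\epsilon}(x,t)\leq C|z|^{2}$ uniformly in $\epsilon$, equivalent by smoothness to the pointwise bound $u^{\epsilon}_{\eta\eta}\leq C$. For the matching lower bound $u^{\epsilon}_{\eta\eta}\geq -C$, differentiate the HJE in \eqref{5.1} twice in the direction $\eta$; with $w=u^{\epsilon}_{\eta\eta}$ and $A_{\epsilon}[\phi]:=-\partial_{t}\phi-(\epsilon\Delta+\mathcal{L})\phi+D_{G}u^{\epsilon}\cdot D_{G}\phi$, a direct computation using the $C^{2}$ hypotheses and Young's inequality on the cross terms produced by the $x_{1}$-dependence of $h$ yields
\[
A_{\epsilon}[w]+|D_{G}u^{\epsilon}_{\eta}|^{2}=R,\qquad |R|\leq C+\tfrac{1}{2}|D_{G}u^{\epsilon}_{\eta}|^{2}.
\]
Dropping the non-negative second term recovers the upper bound above through the parabolic maximum principle. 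For the lower bound I introduce the auxiliary function $\Phi:=-w+\lambda|Du^{\epsilon}|^{2}+\kappa$ and invoke the Bernstein identity obtained by differentiating \eqref{5.1} coordinate-wise, multiplying by $u^{\epsilon}_{x_{k}}$ and summing, which after Young's inequality takes the form
\[
A_{\epsilon}[|Du^{\epsilon}|^{2}]\leq -2\epsilon|D^{2}u^{\epsilon}|^{2}-(\sigma_{1}^{2}+\sigma_{2}^{2})(u^{\epsilon}_{x_{1}x_{2}})^{2}+C.
\]
Choosing $\lambda$ large enough for the negative Bernstein production to dominate $|D_{G}u^{\epsilon}_{\eta}|^{2}$, and $\kappa$ large enough so that $\Phi(\cdot,T)\leq C$ via \textbf{(H5)} and the first-derivative bound, the parabolic maximum principle applied to $\Phi$ gives $\Phi\leq C$ throughout $\mathbb{R}^{2}\times[0,T]$, hence $-w\leq C$.

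$\textbf{Main obstacle.}$ The technical core is to verify that $\lambda$ above can be taken independently of $\epsilon$. Expanding $|D_{G}u^{\epsilon}_{\eta}|^{2}=(\partial_{x_{1}}u^{\epsilon}_{\eta})^{2}+h^{2}(\partial_{x_{2}}u^{\epsilon}_{\eta})^{2}$, the contributions of $u^{\epsilon}_{x_{1}x_{1}}$ and $u^{\epsilon}_{x_{2}x_{2}}$ are only dominated by the $2\epsilon|D^{2}u^{\epsilon}|^{2}$ production, which vanishes as $\epsilon\to 0$, whereas the $(\sigma_{1}^{2}+\sigma_{2}^{2})(u^{\epsilon}_{x_{1}x_{2}})^{2}$ production only controls the cross term. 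Since the degeneracies of $\sigma$ and $h$ are inconsistent, the absorption must exploit the structural relation between the Hamiltonian $\tfrac{1}{2}|D_{G}u^{\epsilon}|^{2}$ and the diffusion $\mathcal{L}$ through the HJE itself, using for instance the identity $A_{\epsilon}[u^{\epsilon}]=F+\tfrac{1}{2}|D_{G}u^{\epsilon}|^{2}$ to trade pure second derivatives for quantities bounded by Step 1. Carrying out this absorption uniformly through the entire infinitely degenerate locus of $\sigma$ and $h$ is the main difficulty of the proof.
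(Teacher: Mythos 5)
Your Step 1 (the uniform Lipschitz bound via the augmented SDE with diffusion $\sqrt{2\epsilon+\sigma_i^2}$, whose Lipschitz and $C^2$ data are $\epsilon$-independent) and your upper bound $u^{\epsilon}_{\eta\eta}\leq C$ (obtained either by replaying the semiconcavity argument of Lemma \ref{2.3} for the augmented control problem, or by discarding the favorably signed term $-|D_Gu^{\epsilon}_{\eta}|^2$ in the twice-differentiated equation) both match the paper and are sound. The gap is exactly where you locate it: the lower bound $u^{\epsilon}_{\eta\eta}\geq -C$, without which the claimed two-sided estimate $\|u^{\epsilon}_{\eta\eta}\|_{\infty}\leq C$ is not established. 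Your Bernstein function $\Phi=-w+\lambda|Du^{\epsilon}|^{2}+\kappa$ cannot close: the only negative production available, $-2\epsilon|D^{2}u^{\epsilon}|^{2}-(\sigma_1^2+\sigma_2^2)(u^{\epsilon}_{x_1x_2})^{2}$, vanishes as $\epsilon\to0$ and on the (infinitely degenerate) zero sets of $\sigma_i$ and $h$, so no $\epsilon$-independent choice of $\lambda$ absorbs $|D_Gu^{\epsilon}_{\eta}|^{2}$; you state this as the ``main obstacle'' but do not resolve it, so the proof is incomplete.

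The paper's device is different and bypasses any Bernstein absorption. After differentiating \eqref{5.1} once and twice in $\eta$, it forms the single scalar auxiliary function $\hat{u}(x,t,\eta):=u^{\epsilon}_{\eta}+u^{\epsilon}_{\eta\eta}$, regarded as a function of the four variables $(x,\eta)$. The cross terms $A_{\eta}D^{2}u^{\epsilon}_{\eta}$ and $A_{\eta\eta}D^{2}u^{\epsilon}$ generated by differentiating the trace are repackaged as $\mathbf{tr}(\hat{A}D^{2}_{(x,\eta)}\hat{u})$ with the block matrix $\hat{A}$ of \eqref{m1}, while the quadratic term $-|D_Gu^{\epsilon}_{\eta}|^{2}-C(1+|D_Gu^{\epsilon}_{\eta}|)$ is simply bounded above by a constant. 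This yields a degenerate parabolic inequality for $\hat{u}$ with bounded right-hand side and bounded terminal data, and the comparison principle gives $\|\hat{u}\|_{\infty}\leq C$ uniformly in $\epsilon$; the bound on $u^{\epsilon}_{\eta\eta}=\hat{u}-u^{\epsilon}_{\eta}$ then follows from the first-derivative estimate. This linear-combination-of-derivatives trick (reused in the proof of Theorem \ref{2.44} with $u^{\epsilon}_{\eta\eta}+u^{\epsilon}_{\eta\eta\eta}$) is the idea missing from your argument; if you prefer to keep a maximum-principle route, you should replace your $\Phi$ by this $\hat{u}$ rather than trying to trade second derivatives against the degenerate diffusion.
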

\begin{proof}
First we prove the uniform Lipschitz continuity of $u^\epsilon$ as Lemma \ref{Lem2.2}. Because of the adding of the term $\epsilon\Delta u^\epsilon$, we just need to modify the diffusion term as follows
\begin{align}\label{eq7.1}
\left\{
 \begin{array}{ll}
  dZ_{1,s}=\alpha_{1,s}ds+\sigma^\epsilon_1(Z_s)dB_{1,s},\\
  dZ_{2,s}=\alpha_{2,s}h(X_{1,s})ds+\sigma^\epsilon_2(Z_s)dB_{2,s},\\
  Z_{1,t}=x_1,\ Z_{2,t}=x_2,
\end{array}
\right.
\end{align}
%\begin{equation}\label{eq7.1}
%  dZ_s=b(Z_s,\alpha_s)ds+\sigma^\epsilon(Z_s)dB_s,
%\end{equation}
where $\sigma^\epsilon_i:=\sqrt{2\epsilon+\sigma_i^2}$, $i=1,2$. Moreover, $\sigma_i^\epsilon$ is also Lipschitz continuous and satisfies $\|\sigma^\epsilon\|_{C^2(\mathbb{R}^2)}\leq C$ by the Lipschitz continuity of $\sigma_i$ and {\bf{(H3)}}.
Then we have $|Du^{\epsilon}|\leq C$, hence we get $\|u^\epsilon_{\eta}\|_\infty\leq C$.

%Finally, we prove the  semiconcavity of the value function $u^\epsilon$, which refers to Lemma 2.2 in \cite{Com}. By Proposition 1.1.3 (e) in \cite{Semi}, we just need to prove $D^{2}u\leq C$, which is different from the method of the Lemma \ref{2.3}.
%Fixed a direction $\eta=(\eta_{1},\eta_{2})$ with $|\eta|=1$, for the function $f:\mathbb{R}^{2}\rightarrow\mathbb{R}$, we use the shorthand
%$$f_{\eta}=Df\cdot\eta,\ \text{and}\ f_{\eta\eta}=D^{2}f\eta\cdot\eta.$$
 Then set the matrix $$A(x):=\frac{1}{2}\sigma(x)\sigma'(x)$$ and $A_{\eta}(x)$ represents the directional derivative of the entries of the matrix, similarly denote $A_{\eta\eta}(x)$. % For simplicity, we drop the $\epsilon$'s, and suppress dependence on $(x,t)$ below where it does not create confusion.
 Compute the derivative of the equation \eqref{5.1} twice w.r.t. $\eta$, referring the idea of Lemma 2.2 in \cite{Com}. By the assumptions {\bf{(H2)}} and {\bf{(H5)}},
 since $|D_Gu^{\epsilon}|^2_\eta=2D_Gu^{\epsilon}D_Gu^{\epsilon}_\eta+(h^{2})_{\eta}(\partial_{x_{2}}u^{\epsilon})^{2}$, we have
\begin{align}\label{1}
&\quad-\partial_{t}u^{\epsilon}_{\eta}-\epsilon\Delta u^{\epsilon}_{\eta}-\mathbf{tr}(AD^{2}u^{\epsilon}_{\eta}+A_{\eta}D^{2}u^{\epsilon})+D_{G}u^{\epsilon}  D_{G}u^{\epsilon}_{\eta}\\\nonumber
&=F_{\eta}(x,\bar{m})-\frac{1}{2}(h^{2})_{\eta}(\partial_{x_{2}}u^{\epsilon})^{2}\leq C,
\end{align}
and% then by $(D_{G}u  D_{G}u_{\eta})_\eta=(D_{G}u_{\eta})^{2}+D_{G}u  D_{G}u_{\eta\eta}+(h^{2})_{\eta}\partial_{x_{2}}u\partial_{x_{2}} u_{\eta}$, we have
\begin{align}\label{3.2}
&-\partial_{t}u^{\epsilon}_{\eta\eta}-\epsilon\Delta u^{\epsilon}_{\eta\eta}-\mathbf{tr}(AD^{2}u^{\epsilon}_{\eta\eta}+2A _{\eta}D^{2}u^{\epsilon}_{\eta}+A_{\eta\eta}D^{2}u^{\epsilon})+D_{G}u^{\epsilon}  D_{G}u^{\epsilon}_{\eta\eta}\\\nonumber
=&F_{\eta\eta}(x,\bar{m})-|D_{G}u^{\epsilon}_{\eta}|^{2}-\frac{1}{2}(h^{2})_{\eta\eta}(\partial_{x_{2}} u^{\epsilon})^{2}
-2(h^{2})_{\eta}\partial_{x_{2}}u^{\epsilon}\partial_{x_{2}} u^{\epsilon}_{\eta}\\\nonumber
\leq&F_{\eta\eta}(x,\bar{m})-|D_{G}u^{\epsilon}_{\eta}|^{2}-C\left(1+|D_{G}u^{\epsilon}_{\eta}|\right).
\end{align}
Since $-|D_{G}u^{\epsilon}_{\eta}|^{2}-C(1+|D_{G}u^{\epsilon}_{\eta}|)$ is bounded above by a constant, we deduce
\begin{align*}
&-\partial_{t}u^{\epsilon}_{\eta\eta}-\epsilon\Delta u^{\epsilon}_{\eta\eta}-\mathbf{tr}(AD^{2}u^{\epsilon}_{\eta\eta}+2A_{\eta}D^{2}u^{\epsilon}_{\eta}+A_{\eta\eta}D^{2}u^{\epsilon})+D_{G}u^{\epsilon}  D_{G}u^{\epsilon}_{\eta\eta}\leq C.
\end{align*}

Now we construct the auxiliary function
\begin{equation}\label{B}
  \hat{u}(x,t,\eta):=u^{\epsilon}_{\eta}+u^{\epsilon}_{\eta\eta}.
\end{equation}
Then by the calculations, we have
\begin{align*}
D^{2}_{(x,\eta)}\hat{u}=\left(
                  \begin{array}{cc}
                    D^{2}u^{\epsilon}_{\eta}+D^{2}u^{\epsilon}_{\eta\eta} & D^{2}u^{\epsilon}+2D^{2}u^{\epsilon}_{\eta} \\
                    D^{2}u^{\epsilon}+2D^{2}u^{\epsilon}_{\eta} & 2D^{2}u^{\epsilon} \\
                  \end{array}
                \right).
\end{align*}
Set
\begin{align}\label{m1}
\hat{A}=\hat{A}(x,t,\eta):=\frac{1}{2}\left(
                  \begin{array}{cc}
                    2A &A_{\eta} \\
                    A_{\eta} & A_{\eta\eta} \\
                  \end{array}
                \right),
\end{align}
then it follows from \eqref{1} and \eqref{3.2} that
\begin{align*}
-\partial_{t}\hat{u}-\epsilon\Delta \hat{u}-\mathbf{tr}(\hat{A}D^{2}\hat{u})+D_{G}u^{\epsilon}  D_{G}\hat{u}\leq C.
\end{align*}
Since $\|\hat{u}(\cdot,T)\|_{\infty}\leq C$ by the assumption {\bf{(H5)}}, then we can conclude by comparison that $\|\hat{u}\|_{\infty}\leq C$ for a constant $C$ is independent of $\epsilon$. Further, since $u^{\epsilon}_{\eta\eta}=\hat{u}-u^{\epsilon}_{\eta}$ and $\|u^{\epsilon}_{\eta}\|_{\infty}\leq C$, we obtain $\|u^{\epsilon}_{\eta\eta}\|_{\infty}\leq C$, which completes the proof.
\end{proof}
\begin{Rem}\label{weiR}
Lemma \ref{Lem3.2} implies that $\partial_{x_i} u^\epsilon\leq C$ and $\partial_{x_i}^2 u^\epsilon\leq C$. By the boundness of $h$ given in {\bf{(H2)}}, we have
\begin{equation}\label{wei1}
\Delta_G u^\epsilon\leq C, \,\, |D_G u^\epsilon|\leq C, \,\, \textup{and}\,\, |D^2_G u^\epsilon|\leq C.
\end{equation}
\end{Rem}
\begin{Rem}
By Proposition 1.1.3 (e) in \cite{Semi} and $\|u^\epsilon_{\eta\eta}\|_\infty\leq C$ given in Lemma \ref{Lem3.2}, we conclude that $u^\epsilon$ is uniformly semiconcave in $\mathbb{R}^{2}$, which is a different proof from the method of Lemma \ref{2.3}.
\end{Rem}
\begin{Lem}\label{Lem3.3}
Under assumptions {\bf{(H2)-(H5)}}, there exists a constant $C>0$ independent of $\epsilon$ such that
\begin{enumerate}
  \item[\rm{(i)}] $\|m^\epsilon\|_{\infty}\leq C,$
  \item[\rm{(ii)}] $d_{1}(m^{\epsilon}_{t_1},m^{\epsilon}_{t_2})\leq C|t_1-t_2|^{\frac{1}{2}}$, for any $t_1, t_2\in(0,T),$
  \item[\rm{(iii)}] $\int_{\mathbb{R}^{2}}|x|^2dm^{\epsilon}_t\leq C(\int_{\mathbb{R}^{2}}|x|^2dm^{\epsilon}_{0}+1)$, for any $t\in(0,T).$
\end{enumerate}
\end{Lem}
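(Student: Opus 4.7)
All three parts rely on two ingredients already established: first, $m^\epsilon$ is the positive classical density of the law of the process $Z$ in the regularised SDE \eqref{eq7.1} driven by the bounded optimal feedback $\alpha^\ast=-D_Gu^\epsilon$, whose coefficients are uniformly bounded in $\epsilon$ (by Remark \ref{weiR} and because $\sigma_i^\epsilon=\sqrt{2\epsilon+\sigma_i^2}\le C$ for $\epsilon\le 1$); second, the $\epsilon$-independent estimates $|D_Gu^\epsilon|\le C$ and $|\Delta_G u^\epsilon|\le C$ collected in Remark \ref{weiR}.

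\textbf{Part (i).} I rewrite the FPE \eqref{5.3} in non-divergence form. Expanding $\mathcal{L}^\ast m$ and $\mathrm{div}_G(m D_Gu)$ using (H2)--(H3) and Remark \ref{weiR} yields
\[
\partial_t m^\epsilon-\epsilon\Delta m^\epsilon-\mathcal{L}m^\epsilon-\mathbf{b}(x,t)\cdot Dm^\epsilon=c(x,t)\,m^\epsilon,
\]
with $\|\mathbf{b}\|_\infty+\|c\|_\infty\le C$ uniformly in $\epsilon$. Setting $\tilde m^\epsilon:=e^{-Kt}m^\epsilon$ with $K>\|c\|_\infty$ turns the reaction term non-positive; since $m^\epsilon(\cdot,t)\in L^1(\mathbb{R}^2)$ with Gaussian-type tails (the $\epsilon\Delta$ term is non-degenerate), a genuine spatial maximum exists on each slice and the parabolic maximum principle gives $\|\tilde m^\epsilon(t)\|_\infty\le\|m_0\|_\infty$. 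Combined with $\|m_0\|_\infty\le C$ from (H4), this proves (i).

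\textbf{Parts (ii) and (iii).} For (ii) I invoke the Kantorovich dual representation with any $1$-Lipschitz test function $\varphi$:
\[
\int_{\mathbb{R}^2}\varphi\,d(m^\epsilon_{t_2}-m^\epsilon_{t_1})=\mathbf{E}\bigl[\varphi(Z_{t_2})-\varphi(Z_{t_1})\bigr]\le\mathbf{E}|Z_{t_2}-Z_{t_1}|.
\]
Writing $Z_{t_2}-Z_{t_1}=\int_{t_1}^{t_2}b\,ds+\int_{t_1}^{t_2}\sigma^\epsilon\,dB$, the boundedness of $b$ produces a $C|t_2-t_1|$ contribution while Cauchy--Schwarz and Lemma \ref{Norm} bound the stochastic integral by $C|t_2-t_1|^{1/2}$; taking the sup over $\varphi$ gives (ii). For (iii) I apply It\^o's formula to $|Z_s|^2\chi_R(Z_s)$ with a smooth cutoff, take expectations (the martingale part vanishes), and pass $R\to\infty$ by monotone convergence. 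Using $2Z\cdot b\le|Z|^2+|b|^2$ together with bounded $b,\sigma^\epsilon$ yields $\mathbf{E}|Z_t|^2\le\mathbf{E}|Z_0|^2+C\int_0^t\bigl(\mathbf{E}|Z_s|^2+1\bigr)ds$, and Gronwall closes the estimate on $[0,T]$.

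\textbf{Expected obstacle.} The most delicate point is (i): the maximum principle on the unbounded domain $\mathbb{R}^2$ requires ruling out the supremum being attained only in the limit $|x|\to\infty$. This I would justify by combining conservation of mass $\int m^\epsilon\,dx=1$ with the smoothing effect of the non-degenerate term $\epsilon\Delta$, which forces sufficient spatial decay of $m^\epsilon$ on $[0,T]$. Note that this step genuinely uses $\epsilon>0$; it is precisely the loss of this argument in the vanishing-viscosity limit that makes the existence of a bounded density for the degenerate FPE \eqref{5.4} nontrivial and is ultimately handled through the compactness afforded by (ii)--(iii).
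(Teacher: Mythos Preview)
Your proof is correct and follows essentially the same route as the paper: part (i) via the non-divergence form of the FPE together with the comparison/maximum principle (the paper phrases your $e^{-Kt}$ substitution as the explicit supersolution $w=Ce^{Ct}$), and parts (ii)--(iii) via the SDE representation $m^\epsilon_t=\mathrm{Law}(Z_t)$ with uniformly bounded drift and diffusion. The only cosmetic difference is that for (iii) the paper bounds $\mathbf{E}|Z_t|^2$ directly from the integral form of the SDE rather than passing through It\^o's formula and Gronwall, and it does not make your unbounded-domain caveat for (i) explicit.
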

\begin{proof}
First, we prove (i) referring to Lemma 3.1 in \cite{0}.
 %By Proposition 1.1.3 (e) in \cite{Semi}, the semiconcave function $u^\epsilon$ given in Lemma \ref{Lem3.2} satisfies $\partial^2_{x_i}u^\epsilon\leq C$, for $i=1,2$. Then
 By Remark \ref{weiR} and $m\geq0$, we have
\begin{align*}
% \nonumber to remove numbering (before each equation)
  \mathrm{div}_G(m^\epsilon D_Gu^\epsilon)= D_G u^\epsilon  D_G m^\epsilon+m^\epsilon\Delta_G u^\epsilon\leq D_G u^\epsilon  D_G m^\epsilon+Cm^\epsilon.
\end{align*}
Therefore, by assumptions {\bf{(H2)}} and {\bf{(H3)}}, the function $m$ satisfies
\begin{align*}
 \partial_t m^\epsilon-\epsilon\Delta m^\epsilon-\sum^2_{i=1}\left(\frac{1}{2}\sigma^2_{i}\partial^2_{x_i}m^\epsilon+\partial_{x_i} \sigma^2_{i}\partial_{x_i} m^\epsilon\right)&= \frac{1}{2}\sum^2_{i=1}m^\epsilon\partial^2_{x_i}\sigma^2_{i}+\mathrm{div}_G(m^\epsilon D_Gu^\epsilon)\\
  &\leq D_G u^\epsilon {D_G} m^\epsilon+Cm^\epsilon,
\end{align*}
with $m^\epsilon(x,0)\leq C$.
Using $w=Ce^{Ct}$ as supersolution, where $C$ is independent of $\epsilon$, we infer $\|m^\epsilon\|_{\infty}\leq w=Ce^{Ct}$,
which provided by comparison principle for subsolution $m^\epsilon$.

Second, we prove (ii) referring to Lemma 3.4 in \cite{Cardaliaguet}. Denote the distribution of $Z_t$ defined in SDE \eqref{eq7.1} by $m^\epsilon_t$. By standard arguments of Lemma 3.3 in \cite{Cardaliaguet}, $m^\epsilon_t$ is a weak solution to the equation \eqref{5.3}. By the definition of $d_1$, we note that the law $\gamma$ of the pair $(Z_t,Z_s)$ belongs to $\Pi(m_t,m_s)$, so that
$$ d_1(m^{\epsilon}_t,m^{\epsilon}_s) \leq \int_{\mathbb{R}^{2}\times\mathbb{R}^{2}}|x-y|d\gamma(x,y)=\mathbf{E}\big[|Z_t-Z_s|\big].$$
%Here we explicitly provide the calculations for the second component $Z_{2,s}$, since the calculations for $Z_{1,s}$ are same with $h=1$.

For instance $t<s$, since the optimal control $\alpha^*=D_Gu^\epsilon$ is bounded by \eqref{2.25}, it follows from \eqref{eq7.1}, Jensen inequality and Lemma \ref{Norm} that
\begin{align*}
\mathbf{E}\big[|Z_{2,s}-Z_{2,t}|\big]%&\leq C\|h\|_{C^2(\mathbb{R})}\mathbf{E}\left[\int_t^s|\alpha^*_{2,\tau}|d\tau\right]+\mathbf{E}\left[\int^s_t|\sigma_2^\epsilon(Z_{\tau})|dB_{2,\tau}\right]\\
&\leq C\|h\|_{C^2(\mathbb{R})}\mathbf{E}\left[\int_t^s|D_Gu^\epsilon| d\tau\right]
+\bigg(\mathbf{E}\bigg[\bigg(\int^s_t|\sigma_2^\epsilon(Z_{\tau})|dB_{2,\tau}\bigg)^2\bigg]\bigg)^{\frac{1}{2}}\\
 &\leq C\|h\|_{C^2(\mathbb{R})}|s-t|
  +\mathbf{E}\left[\int^s_t|\sigma_2^\epsilon(Z_\tau)|^2d\tau\right]^{\frac{1}{2}}\\
  &\leq C\|h\|_{C^2(\mathbb{R})}T^{\frac{1}{2}}|s-t|^{\frac{1}{2}}+\|\sigma^\epsilon\|_{C^2(\mathbb{R}^2)}|s-t|^{\frac{1}{2}}\\
  &\leq C|s-t|^{\frac{1}{2}},
  \end{align*}
and analogously for $|Z_{1,s}-Z_{1,t}|$. Here $C>0$ is independent of $\epsilon$, because of the uniformly boundness of $\sigma^\epsilon$, $h$ and $D_Gu^\epsilon$. Then (ii) holds.

Finally, we prove (iii) by the same argument of Lemma 3.5 in \cite{Cardaliaguet}. We have
\begin{align*}
  \int_{\mathbb{R}^{2}}x_2^2 dm^{\epsilon}_t=\mathbf{E}\left[|Z_{2,t}|^2\right]
  &  \leq C\mathbf{E}\bigg[|Z_{2,t}|^2+\int_{t}^{s}|h(Z_{1,\tau})\alpha_{2,\tau}|^2 d\tau
  +\int_{t}^{s}|\sigma_2^\epsilon(Z_{\tau})|^2d\tau\bigg]\\
  &\leq C\left(\int_{\mathbb{R}^{2}}x_2^2 dm_0+\|h\|_{C^2(\mathbb{R})}^2|s-t|^2+\|\sigma\|_{C^2(\mathbb{R}^2)}^2|s-t|\right)\\
  &\leq C\bigg(\int_{\mathbb{R}^{2}}|x|^2 dm_0+1\bigg),
  \end{align*}
and analogously for $\int_{\mathbb{R}^{2}}x_1^2 dm^{\epsilon}_t$. Then the result follows.
\end{proof}

\begin{Lem}\label{Lem3.4}
Under the same assumptions of Lemma \ref{Lem3.3}, the function $u^{\epsilon}$ is uniformly continuous w.r.t. $t$ and uniformly in $\epsilon$.
\end{Lem}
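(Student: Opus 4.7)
The plan is to extract uniform time regularity directly from the PDE by isolating $\partial_t u^\epsilon$ and bounding every other term in the equation \eqref{5.1} by a constant independent of $\epsilon$. Since Lemma \ref{Lem3.1} gives that $u^\epsilon$ is a classical solution, we may rewrite \eqref{5.1} pointwise as
\begin{equation*}
\partial_t u^\epsilon = -\epsilon \Delta u^\epsilon - \mathcal{L} u^\epsilon + \tfrac{1}{2}|D_G u^\epsilon|^2 - F(x,\bar{m}),
\end{equation*}
so it suffices to control the $L^\infty$ norm of the right-hand side uniformly in $\epsilon$.

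For each of the four terms on the right, I would invoke an estimate already in hand. The term $F(x,\bar{m})$ is bounded by a constant via assumption \textbf{(H5)}. The term $\frac{1}{2}|D_G u^\epsilon|^2$ is bounded by Remark \ref{weiR}, which gives $|D_G u^\epsilon|\leq C$ uniformly in $\epsilon$. For $\mathcal{L} u^\epsilon = \frac{1}{2}\mathbf{tr}(\sigma\sigma' D^2 u^\epsilon)$, apply Lemma \ref{Lem3.2} with $\eta = e_i$ for $i=1,2$ (and the parallelogram identity for mixed partials, $2u^\epsilon_{x_i x_j} = u^\epsilon_{(e_i+e_j)(e_i+e_j)} - u^\epsilon_{e_i e_i} - u^\epsilon_{e_j e_j}$) to get $\|D^2 u^\epsilon\|_\infty \leq C$ uniformly in $\epsilon$; combined with the uniform bound $\|\sigma\|_{C^2(\mathbb{R}^2)}\leq C$ from \textbf{(H3)}, this yields $|\mathcal{L} u^\epsilon|\leq C$. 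The viscosity term $\epsilon \Delta u^\epsilon$ is even easier: by the same $D^2 u^\epsilon$ bound we have $|\epsilon \Delta u^\epsilon| \leq \epsilon C \leq C$ for $\epsilon\in(0,1]$.

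Combining these four bounds yields $\|\partial_t u^\epsilon\|_\infty \leq C$ with $C$ independent of $\epsilon$, which is a Lipschitz (hence uniform continuity) estimate in $t$ with a modulus independent of $\epsilon$. I do not foresee a serious obstacle: the only thing to be careful about is that every constant feeding into the estimate of $\partial_t u^\epsilon$ is genuinely $\epsilon$-independent, but this is guaranteed because Lemma \ref{Lem3.2} and Remark \ref{weiR} were stated precisely with uniform-in-$\epsilon$ bounds. An equivalent alternative, which I would mention as a sanity check, is to argue probabilistically: $u^\epsilon$ is the value function for the controlled SDE \eqref{eq7.1} with diffusion $\sigma_i^\epsilon=\sqrt{2\epsilon+\sigma_i^2}$, which is bounded uniformly in $\epsilon$, and the optimal feedback $\alpha^*=-D_G u^\epsilon$ is bounded uniformly in $\epsilon$ by Lemma \ref{Lem3.2}; repeating Step 3 of the proof of Lemma \ref{Lem2.2} verbatim then gives $|u^\epsilon(x,s)-u^\epsilon(x,t)|\leq C|s-t|$ with $C$ independent of $\epsilon$.
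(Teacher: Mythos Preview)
Your approach is correct and genuinely different from the paper's. You exploit Lemma~\ref{Lem3.2} directly: since $\|u^\epsilon_{\eta\eta}\|_\infty\le C$ for every unit direction $\eta$, the full Hessian $D^2u^\epsilon$ is bounded uniformly in $\epsilon$, and plugging this into the equation \eqref{5.1} isolates $\partial_t u^\epsilon$ and yields a uniform Lipschitz bound in $t$. (Note that since $\sigma$ is diagonal, $\mathcal{L}u^\epsilon$ only involves the pure second partials $\partial_{x_i}^2u^\epsilon$, so the polarisation identity for mixed derivatives is not even needed here.) Your probabilistic alternative, re-running Step~3 of Lemma~\ref{Lem2.2} with the modified diffusion $\sigma^\epsilon_i=\sqrt{2\epsilon+\sigma_i^2}$, is equally valid and also yields Lipschitz in $t$.

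The paper instead follows a barrier/time-shift argument in the style of \cite{Cardaliaguet13}: it first compares $u^\epsilon$ with $G(x,\bar m_T)\pm C_0(T-t)$ to control $|u^\epsilon(x,t)-u^\epsilon(x,T)|$, and then compares $u^\epsilon(\cdot,t)$ with the shifted function $u^\epsilon(\cdot,t-\tau)+C_1\tau+\delta(\tau)(T-t)$, where $\delta(\tau)$ encodes the modulus of continuity of $t\mapsto F(\cdot,\bar m_t)$. This yields only uniform continuity (the modulus depends on $\delta(\tau)$), and it uses only the $C^2$ bound on the terminal datum $G$ from {\bf(H5)} together with the comparison principle, rather than the interior Hessian bound of Lemma~\ref{Lem3.2}. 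In short: your argument is shorter and gives a sharper conclusion (Lipschitz rather than a mere modulus), at the cost of invoking the full strength of Lemma~\ref{Lem3.2}; the paper's argument is softer and would survive in settings where only semiconcavity, not a two-sided second-derivative bound, is available.
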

\begin{proof}
We shall follow the arguments of Lemma 5.1 in \cite{Cardaliaguet13}. Set $u^{\epsilon}_{T}:=u^{\epsilon}(x,T)$, then $u^{\epsilon}_{T}$ are bounded in $C^2(\mathbb{R}^2)$ uniformly in $\epsilon$ by the assumption {\bf{(H2)}}. As Lemma \ref{Lem3.2}, there exists constant $C_0$ such that $w^\pm(x,t):=u^{\epsilon}_{T}(x)\pm C_0(T-t)$ are respectively super- and sub-solution of the equation \eqref{5.1} for any $\epsilon$. Actually, we have
$$-\partial_t w^{+}-\big(\epsilon\Delta+\frac{1}{2}{\bf{tr}}(\sigma^\epsilon \sigma^\epsilon{'}D^2)\big)w^++\frac{1}{2}|D_Gw^+|^2-F(x,\bar{m})\geq C_0-C\epsilon-C\geq0,$$
 and similarly for $w^-$. Hence the comparison principle gives that for any $t\in[0,T]$,
 \begin{equation}\label{6.2}
 \|u^\epsilon(x,t)-u_T^\epsilon(x)\|_{\infty}\leq C_1(T-t).
 \end{equation}

For the equation \eqref{5.1}, the assumption {\bf{(H5)}} and Lemma \ref{Lem3.3} imply that
\begin{align*}
% \nonumber to remove numbering (before each equation)
  \sup_{t\in[\tau,T]}\|F(x,\bar{m}_t)-F(x,\bar{m}_{t-\tau})\|_{\infty}
  %&\leq C \sup_{t\in[\tau,T]}|\bar{m}_t-\bar{m}_{t-\tau}|\\
  %&\leq C \sup_{t\in[\tau,T]}\{\int_{\mathbb{R}^{2}}\phi(x)(\bar{m}_{t}-\bar{m}_{t-\tau})dx;\phi\ \text{is Lips}\} \\
  &\leq C \sup_{t\in[\tau,T]}d_1(\bar{m}_{t},\bar{m}_{t-\tau})=:\delta(\tau),
\end{align*}
and $\delta(\tau)\rightarrow0$ as $\tau\rightarrow0$.
Set $$v^{\epsilon}_{\tau}(x,t):=u^{\epsilon}(x,t-\tau)+C_1\tau+\delta(\tau)(T-t),$$
 then for any $t\in[\tau,T],$ we have
\begin{align*}
% \nonumber to remove numbering (before each equation)
   &\quad  -\partial_t v^{\epsilon}_{\tau}(x,t)-\big(\epsilon\Delta+\frac{1}{2}{\bf{tr}}(\sigma^\epsilon\sigma^\epsilon{'}D^2)\big)v^{\epsilon}_{\tau}(x,t)+\frac{1}{2}|{D_G} v^{\epsilon}_{\tau}(x,t)|^2-F(x,\bar{m})(x,t) \\
 % &=& -\partial_t u^{\epsilon}(x,t-\tau)-(\epsilon\Delta+\Delta_G)u^{\epsilon}(x,t-\tau)\\
%  &\ &+\frac{1}{2}|{D_G} u^{\epsilon}(x,t-\tau)|^2+\eta(\tau)-F(x,\bar{m})(x,t) \\
  &= F(x,\bar{m})(x,t-\tau)-F(x,\bar{m})(x,t)+\delta(\tau)\geq0,
\end{align*}
that means $v^{\epsilon}_{\tau}(x,t)$ is a supersolution of \eqref{5.1}. By the estimate \eqref{6.2}, we have $$v^{\epsilon}_{\tau}(x,T):=u^{\epsilon}(x,T-\tau)+C_1\tau\geq u^{\epsilon}(x,T).$$
Therefore, again by comparison principle, we get $v^{\epsilon}_{\tau}(x,t)\geq u^{\epsilon}(x,t)$, that is $$u^{\epsilon}(x,t)\leq u^{\epsilon}(x,t-\tau)+C_1 \tau+\delta(\tau)(T-t).$$
In a similar way, we also obtain
$$u^{\epsilon}(x,t)\geq u^{\epsilon}(x,t-\tau)-C_1 \tau-\delta(\tau)(T-t),$$
let $\tau\rightarrow0$, accomplishing the proof.
\end{proof}

\begin{proof}[\bf{Proof of Proposition \ref{3.1}}]

As in the proof of Theorem 3.1 in \cite{Cardaliaguet} for the classical systems one can check that the $\{m^\epsilon\}$  all belong to the compact subset $\mathcal {C}$ of $C([0,T],\mathcal {P}_1)$. % In particular, up to a subsequence, the $m^\epsilon$ converges to some $m\in\mathcal {C}$.
Then \eqref{3} given in Lemma \ref{Lem3.1} implies that $u^\epsilon$ are uniformly bounded.  Then Lemma \ref{2.1}, Lemma \ref{Lem3.2} and Lemma \ref{Lem3.4} give that $u^\epsilon$ locally uniform converges to $u$, $u$ is semiconcave, and $D_Gu^\epsilon$ converges to $D_G u$  a.e.. By standard stability result for viscosity solutions, the function $u$ solves the HJE \eqref{5.2}.

We now need to pass to the limit in the FPE \eqref{5.4}. By the bounds on $m^\epsilon$ given in Lemma \ref{Lem3.3}, as $\epsilon\to0^+$, then $m^\epsilon$ converges to some $m\in \mathcal {C}$ in $C([0,T],\mathcal {P}_1)$ topology and in $L^{\infty}_{loc}$-weak$^*$ topology. Moreover we deduce that $m(x,0)=m_0(x)$. Since $m^\epsilon$ are solutions to \eqref{5.3}, for any $\varphi\in C^\infty_0(\mathbb{R}^2\times(0,T))$, there holds
$$\int^T_0\int_{\mathbb{R}^2}m^\epsilon\big(-\partial_t\varphi-(\epsilon\Delta+\mathcal {L})\varphi +D_Gu^\epsilon   D_G\varphi\big) dxd\tau=0.$$
Letting $\epsilon\to0^+$, by the $L^{\infty}_{loc}$-weak$^*$ convergence of $m^\epsilon$, and by the convergence $D_G u^\epsilon\to D_G u$ a.e., we conclude that the function $m$ solves the equation \eqref{5.4}.

%with $\epsilon\Delta u+\frac{1}{2}{\mathbf{tr}}(A(x,t)D^2u)\leq C$ by uniform semiconcavity of $u^\epsilon$, one easily checks that
%$$\|m^\epsilon( ,t)\|_{\infty}\leq\|m_0\|_{\infty}e^{Ct},\ \forall t\in[0,T],$$
%because $m^\epsilon\geq0$ and the right-hand side of the above inequality is a super-solution of \eqref{5.3}. So the $m^\epsilon$ converge, still up to a subsequence, to $m$ in $L^{\infty}_{loc}$-weak$^*$. Now, recalling that $D_G u^\epsilon$ converges a.e. to $D_G u$, we can pass to the limit in the weak formulation of equation \eqref{5.3} to get that $m$ is a weak solution of equation \eqref{5.4}.

For the uniqueness of the vanishing viscosity limit, one can check it by the comparison principle for the viscosity solution, which is given in Theorem 4.4.5 of \cite{Hu}.
\end{proof}
\section{EXISTENCE AND UNIQUENESS OF THE MFG SYSTEMS}
In this section, we first prove the existence and the uniqueness of the MFG systems \eqref{1.2} given in Theorem \ref{2.4}. Then adding  the assumption \textbf{(H7)}, we obtain the higher regularity given in Theorem \ref{2.44}.

 The uniqueness of the MFG systems \eqref{1.2} holds depending on the monotonicity of $F$ and $G$ in \textbf{(H6)}, which can refer to Theorem 4.3 in \cite{LR}. Now we only prove the existence in Theorem \ref{2.4}, which is based on the Schauder fixed point theorem.
\begin{proof}[\bf{Proof of Theorem \ref{2.4}}]
 For any $\mu\in\mathcal {C}$, we associate $m=\psi(\mu)$ in the following way. By Proposition \ref{3.1}, there exists a unique solution $u$ of the HJE
\begin{equation}\label{3.3}
\left\{
  \begin{array}{ll}
     -\partial_t u-\mathcal {L}u+\frac{1}{2}|D_Gu|^2=F(x,\mu), \qquad\text{in}\ \mathbb{R}^{2}\times (0,T), \\
    u(x,T)=G(x,\mu_T),\qquad\qquad\qquad\qquad x\in\mathbb{R}^{2}.
  \end{array}
\right.
\end{equation}
Define $m:=\psi(\mu)$ as the solution of the FPE
\begin{equation}\label{3.4}
\left\{
  \begin{array}{ll}
    \partial_t m-\mathcal {L}^*m-\mathrm{div}_G(mD_Gu)=0,\qquad\ \ \text{in}\ \mathbb{R}^{2}\times(0,T),\\
     m(x,0)=m_0(x),\qquad\qquad\qquad\qquad\quad\ x\in \mathbb{R}^{2}.
  \end{array}
\right.
\end{equation}
 Then the mapping $\psi$ is single valued by Proposition \ref{3.1}.

 First, let us check that $\psi$ is a well-defined from $\mathcal {C}$ to itself. By the same argument of Lemma \ref{Lem3.3}-(ii), for $X_s$ satisfying \eqref{eq1.7}, $t<s\leq T$, we have $$d_1(m_s,m
  _t)\leq  \mathbf{E}\big[|X_s-X_t|\big]\leq C|s-t|^{\frac{1}{2}}.$$
%\begin{align*}
% \nonumber to remove numbering (before each equation)
%  d_1(m_t,m
%  _s) &\leq%\int_{\mathbb{R}^{2}}|x-y|d\gamma(x,y)=
%  E\left[|X_t-X_s|\right]\\
%  &\leq C(t-s)^{\frac{1}{2}}.
 %  &\leq C\|h\|_{\infty}\mathbf{E}\bigg[\int^{s}_{t}|\alpha_s|d\tau+\int^{t}_{s}|\sigma(X_\tau)|d{B_\tau}\bigg].
%\end{align*}
%By the isometric property, we have
%\begin{align*}
 % d_1(m_t,m_s)
  % &\leq %C\|h\|_{\infty}\mathbf{E}\bigg[\int^{s}_{t}|D_Gu|^{\gamma}d\tau\bigg]+E\bigg[\int^{s}_{t}|\sigma(X_\tau)|^2d{B_\tau}\bigg]^{\frac{1}{2}} \\
  %&\leq \|D_Gu\|^\gamma_{\infty} (s-t)+\mathbf{E}\bigg[\sup_{t\leq\tau\leq s}{\sigma(X_\tau)}\bigg](t-s)^{\frac{1}{2}}\\
  %&\leq C(t-s)^{\frac{1}{2}},
%\end{align*}
%since the boundedness of $p$, $\sigma$ and $\alpha$.
Thus by the definition \eqref{eq2.7}, $m$ belongs to $\mathcal {C}$, and the mapping $\psi:\mu\rightarrow m=\psi(\mu)$ is well-defined from $\mathcal {C}$ into itself.

Second, let us check that $\psi$ is a continuous map. Let $\mu_n\in \mathcal {C}$ converges to some $\mu$. Let $(u_n,m_n)$ and $(u,m)$ be the corresponding solutions to \eqref{3.3}-\eqref{3.4}, then for any $\varphi\in C^\infty_0(\mathbb{R}^2\times(0,T))$, there holds
\begin{equation}\label{m}
 \int^T_0\int_{\mathbb{R}^2}m_n\big(-\partial_t\varphi-\mathcal {L}\varphi +D_Gu_n D_G\varphi\big) dxd\tau=0.
\end{equation}
By the continuity assumption {\bf{(H5)}} on $F$ and $G$, we get $(x,t)\rightarrow F(x,\mu_n(t))$, $x\rightarrow G(x,\mu_n(T))$ locally uniformly converges to $(x,t)\rightarrow F(x,\mu(t))$, $x\rightarrow G(x,\mu(T))$.
Then one gets the local uniformly convergence of $u_n$ to $u$ by standard arguments of viscosity solutions.
  Since $\{u_n\}_n$ are semiconcave by Lemma \ref{2.3}, which implies that $(u_n)_{\eta\eta}\leq C$ in the distribution sense for any direction vector $\eta\in \mathbb{R}^{2}$ defined in \eqref{5.31}, where $C$ is uniformly for $n$. Then by the local uniform convergence of $u_n$ to $u$, Lemma \ref{2.1} gives that $\{D_Gu_n\}_n$ converges almost everywhere to $D_Gu$ in $\mathbb{R}^2\times(0,T)$. Let $n\rightarrow\infty$ in \eqref{m}, by the $L^\infty_{loc}$-weak* convergence of $m_n$, and by the convergence $D_Gu_n\rightarrow D_Gu$ a.e., then the limit of any converging subsequence of $m_n$ is a weak solution of \eqref{3.4}. But $m$ is the unique weak solution of the equation \eqref{3.4}, which proves that $\{m_n\}$ converges to $m$.

Because $\mathcal {C}$ is compact, the continuous map $\psi$ is compact. We conclude by Schauder fixed point theorem that the compact map  $\mu\rightarrow m=\psi(\mu)$ has a fixed point in $\mathcal {C}$. This fixed point $m$ and its corresponding $u$ is a pair of solutions to the MFG systems \eqref{1.2}. Then the result follows.
\end{proof}

\begin{proof}[{\bf{Proof of Theorem \ref{2.44}}}]
Based on Proposition \ref{3.1} and Theorem \ref{2.4}, we have ${D_G}u^\epsilon\rightarrow {D_G}u$ $a.e.$, then our main task is to prove
\begin{align*}
{D^{2}}u^\epsilon\rightarrow {D^{2}}u \ a.e.,
\end{align*}
where $u^\epsilon$ is the solution of the equation \eqref{5.1}, and $u$ is the solution of the HJE \eqref{5.2}.

\noindent{\bf{Step\ 1.}} Here we claim that under the assumptions {\bf{(H2)-(H5)}} and {\bf{(H7)}}, for any direction vector $\eta\in \mathbb{R}^{2}$ defined in \eqref{5.31}, we have
\begin{equation}\label{D3}
\|u^\epsilon_{\eta\eta\eta}\|_{\infty}\leq C,
\end{equation}
where $C$ is a positive constant independent of $\epsilon$.

In fact, by setting
$$Q(x):=diag\{1, h(x_1)\},$$
we have $D_Gu^\epsilon=Du^{\epsilon} Q$. Compute the derivative of the equation \eqref{5.1} twice w.r.t. $\eta$. Similar to \eqref{3.2}, we have
\begin{align}\label{3.22}
&\quad -\partial_{t}u^{\epsilon}_{\eta\eta}-\epsilon\Delta u^{\epsilon}_{\eta\eta}-\mathbf{tr}(AD^{2}u^{\epsilon}_{\eta\eta}+2A _{\eta}D^{2}u^{\epsilon}_{\eta}+A_{\eta\eta}D^{2}u^{\epsilon})\\\nonumber
&\quad+Du^{\epsilon}Q^2(Du^{\epsilon}_{\eta\eta})'+2Du^{\epsilon}(Q^2)_{\eta}(Du^{\epsilon}_\eta)'+Du^{\epsilon}_\eta Q^2(Du^{\epsilon}_\eta)' \\\nonumber
&=F_{\eta\eta}-\frac{1}{2}Du^{\epsilon}(Q^2)_{\eta\eta}(Du^{\epsilon})'\leq C.
\end{align}
Compute the derivative of the \eqref{3.22}  w.r.t. $\eta$ again, {\bf{(H7)}} and Lemma \ref{Lem3.2} give that
\begin{align}\label{3.33}
&\quad -\partial_{t}u^{\epsilon}_{\eta\eta\eta}-\epsilon\Delta u^{\epsilon}_{\eta\eta\eta}-\mathbf{tr}(AD^{2}u^{\epsilon}_{\eta\eta\eta}+3A _{\eta}D^{2}u^{\epsilon}_{\eta\eta}+3A_{\eta\eta}Du^{\epsilon}_\eta+A_{\eta\eta\eta}D^{2}u^{\epsilon})\\\nonumber
&\quad+Du^{\epsilon}Q^2(Du^{\epsilon}_{\eta\eta\eta})'+3Du^{\epsilon}_\eta Q^2(Du^{\epsilon}_{\eta\eta})'+3Du^{\epsilon}(Q^2)_{\eta}(Du^{\epsilon}_{\eta\eta})'\\\nonumber
&=F_{\eta\eta\eta}-3Du^{\epsilon}_\eta(Q^2)_{\eta}(Du^{\epsilon}_\eta)'-3Du^{\epsilon}(Q^2)_{\eta\eta}(Du^{\epsilon}_{\eta})'
-\frac{1}{2}Du^{\epsilon}(Q^2)_{\eta\eta\eta}(Du^{\epsilon})'\leq C.
\end{align}
Now we construct the auxiliary function
\begin{equation}\label{A}
\bar{u}^{\epsilon}=\bar{u}^{\epsilon}(x,t,\eta):=u^{\epsilon}_{\eta\eta}+u^{\epsilon}_{\eta\eta\eta}.
\end{equation}
By calculations, we get
$$D_{(x,\eta)}\bar{u}^{\epsilon}=\big(Du^{\epsilon}_{\eta\eta}+Du^{\epsilon}_{\eta\eta\eta},2Du^{\epsilon}_{\eta}+3Du^{\epsilon}_{\eta\eta}\big),$$
and
$$D^2_{(x,\eta)}\bar{u}^{\epsilon}=\left(
                          \begin{array}{cc}
                            D^2u^{\epsilon}_{\eta\eta}+D^2u^{\epsilon}_{\eta\eta\eta} & 2D^2u^{\epsilon}_{\eta}+3D^2u^{\epsilon}_{\eta\eta} \\
                            2D^2u^{\epsilon}_{\eta}+3D^2u^{\epsilon}_{\eta\eta} & 2D^2u^{\epsilon}+6D^2u^{\epsilon}_\eta \\
                          \end{array}
                        \right).$$
Then we set the vector  $$ \bar{Q}=\big(Du^{\epsilon}Q^2,(Du^{\epsilon}Q^2)_\eta\big).$$
Recall $\hat{A}$ defined in \eqref{m1}, it follows from \eqref{3.22}, \eqref{3.33} and Lemma \ref{Lem3.2} that
\begin{align*}
-\partial_{t}\bar{u}^{\epsilon}-\epsilon\Delta \bar{u}^{\epsilon}-\mathbf{tr}(\hat{A}D^{2}\bar{u}^{\epsilon})+\bar{Q}(D\bar{u}^{\epsilon})'\leq C+\mathbf{tr}(A_{\eta\eta\eta}D^2\bar{u}^{\epsilon})+Du^{\epsilon}_\eta Q^2(Du^{\epsilon}_{\eta})'\leq C.
\end{align*}
Consider that $\|\bar{u}^\epsilon(\cdot,T)\|_{\infty}\leq C$ by the assumption {\bf{(H7)}} and we can conclude by comparison principle that $\|\bar{u}^\epsilon\|_{\infty}\leq C$ for a constant $C$ is independent of $\epsilon$. Since $u^\epsilon_{\eta\eta\eta}=\bar{u}^\epsilon-u^\epsilon_{\eta\eta}$ and $\|u^\epsilon_{\eta\eta}\|_{\infty}\leq C$ given in Lemma \ref{Lem3.2}, then we obtain $\|u^\epsilon_{\eta\eta\eta}\|_{\infty}\leq C$, it implies that the claim is valid.

\noindent{\bf{Step\ 2.}}
The claim \eqref{D3} gives that
$u_\eta^\epsilon$  is a sequence of uniformly semiconcave functions on $\mathbb{R}^{2}$, where $u^\epsilon_\eta$ satisfies \eqref{1}.
%Since $\{Du^\epsilon\}_{\epsilon\geq0}$ is a sequence of globally Lipschitz, semiconcave functions uniformly in $\epsilon$.
Lemma \ref{Lem3.2} gives that $u_\eta^\epsilon$ are uniformly bounded. Hence by Lemma \ref{2.1}, we know that $Du_\eta^\epsilon$ converges to $Du_\eta$ for a.e. $x\in\mathbb{R}^{2}$. By the arbitrary of $\eta$, we have ${D^{2}}u^\epsilon\rightarrow {D^{2}}u$ $a.e.$.

By {\bf(H2)} and {\bf(H3)}, let $\epsilon\rightarrow0$, we conclude that the viscosity solution $u$ satisfies the HJE \eqref{5.2} in the almost everywhere sense, which completes the proof.
\end{proof}

\textbf{Acknowledgments}

The authors are grateful to the referees for their careful reading and valuable comments.

\vspace{0.4cm}

\addcontentsline{toc}{section}{References} %\newpage
%\bibliography{master}
%\bibliographystyle{wileybib}
%\nocite{*}
\end{document}